\journal{JAT}
\DeclareMathOperator{\XX}{\mathbb{X}}
\DeclareMathOperator*{\spann}{span}\DeclareMathOperator{\dist}{dist}\DeclareMathOperator{\supp}{supp}
\theoremstyle{definition}\newtheorem{definition}{Definition}[section]\newtheorem{remark}[definition]{Remark}
\theoremstyle{plain}\newtheorem{theorem}[definition]{Theorem}
\newcommand{\Z}{\mathbb{Z}}\newcommand{\R}{\mathbb{R}}
\begin{document}

\begin{frontmatter}

\title{Metric entropy, $n$-widths, and sampling of functions on manifolds}
%\tnoteref{t1}}
%\tnotetext[t1]{Dedicated to Hrushikesh N.~ Mhaskar on the occasion of his 60th birthday
%}
%% Group authors per affiliation:
\author{Martin Ehler\corref{mycorrespondingauthor}}
\address{University of Vienna, Department of Mathematics, Oskar-Morgenstern-Platz 1, A-1090 Vienna
} 
\ead{martin.ehler@univie.ac.at}

%% or include affiliations in footnotes:
\author{Frank Filbir}
\address{Faculty of Mathematics, Technische Universit\"at M\"unchen, 
Boltzmannstrasse 3, 85748, Garching, Germany,  and 
Helmholtz Zentrum M\"unchen,
Ingolst\"adter Landstrasse 1,  
D-85764 Neuherberg
}\ead{filbir@helmholtz-muenchen.de}

%\author[mysecondaryaddress]{Global Customer Service\corref{mycorrespondingauthor}}
%\cortext[mycorrespondingauthor]{Corresponding author}
%\ead{support@elsevier.com}
%
%\address[mymainaddress]{1600 John F Kennedy Boulevard, Philadelphia}
%\address[mysecondaryaddress]{360 Park Avenue South, New York}

\begin{abstract}
We first investigate on the asymptotics of the Kolmogorov metric entropy and nonlinear $n$-widths of approximation spaces on some function classes on manifolds and quasi-metric measure spaces. Secondly, we develop constructive algorithms to represent those functions within a prescribed accuracy. The constructions can be based on either spectral information or scattered samples of the target function. Our algorithmic scheme is asymptotically optimal in the sense of nonlinear $n$-widths and asymptotically  optimal up to a logarithmic factor with respect to the metric entropy. 
\end{abstract}

%\begin{keyword}
%\texttt{elsarticle.cls}\sep \LaTeX\sep Elsevier \sep template
%\MSC[2010] 00-01\sep  99-00
%\end{keyword}

\end{frontmatter}

%\linenumbers

\section{Introduction}
In classical computational mathematics, it is customary to represent a function by using finitely many parameters, e.g., the coefficients of some truncated series expansion. Representing a function in terms of binary bits rather than a sequence of real numbers is the problem of quantization. The integers thus obtained are represented as a bit string to which coding techniques can be applied to achieve a final compression. In wireless communication, for instance, one needs to transform an analogue signal into a stream of bits, from which the original signal should be recovered at the receiving end with a minimal distortion.

The theory of bit representation of functions pre-dates these modern requirements and was already studied by Kolmogorov. The notion of metric  entropy in the sense of Kolmogorov gives a measurement of the minimal number of bits needed to represent an arbitrary function from a compact subset of a function space. Babenko, Kolmogorov, Tikhomirov, Vitushkin, and Yerokhin have given many estimates on the metric entropy for several compact subsets of the standard function spaces, cf.~\cite{Kolmogorov:1959fk}, \cite[Chapter 2,3]{Vitushkin:1961fk}. 
%For constructions of $\varepsilon$-coverings on periodic smoothness spaces, for instance, we refer to \cite{Dung:2001uq,Temlyakov:1990kx}. The concept of $\varepsilon$-entropy is also closely related to entropy numbers, see \cite{Carl:1990fk,Edmunds:1996vn,Haroske:2007ys}.

Constructive algorithms were derived in \cite{Mhaskar:2005fk} to represent functions in suitably defined Besov spaces on the sphere using asymptotically the same number of bits as the metric entropy of these classes, except for a logarithmic factor. A generalization was obtained for compact smooth Riemannian manifolds $\mathbb{X}$ and  global approximation spaces in \cite{Ehler:2013fkxx}. Periodic function classes were considered in \cite{Dung:2001uq,Temlyakov:1990kx}. Related measures of complexity are $n$-widths \cite{Kolmogorov:1936fk} and were studied for some classical function spaces in \cite{Pinkus:1985uq,Pinkus:1985kx}, see \cite{DeVore:1989sp} for the concept of nonlinear $n$-widths. We also refer to \cite[Chapter 1]{Carl:1990fk}, \cite[Sections 1.3, 3, 4]{Edmunds:1996vn}, \cite[Sections 6.3, 6.4]{Haroske:2007ys} for related results.

Both concepts, metric entropy and $n$-widths, are important complexity measures for the analysis of functions on high-dimensional datasets  occurring in biology, medicine, and related areas. Many computational schemes are categorized into the field of manifold learning, where functions need to be learned from finitely many training data that are assumed to lie on some (unknown) manifold \cite{Chui:2015th,Coifman:2005aa,Nadler:2007fk,Roweis:2000aa,Scholkopf:2002fk,Tenenbaum:2000aa}. While much of the recent research in this direction focuses on the understanding of data geometry, approximation theory methods were introduced in \cite{Filbir:2010aa,Filbir:2011fk,Maggioni:2008fk,Mhaskar:2010kx,Mhaskar:2011uq}.

The purpose of the present paper is to generalize results on metric entropy and $n$-widths to the context of sampled functions on quasi-metric measure spaces and covering a larger scale of approximation spaces. Indeed, we  determine the asymptotics of the metric entropy and nonlinear $n$-widths for global approximation spaces. We also provide a computational approximation method, and this scheme is asymptotically optimal with respect to the nonlinear $n$-widths and asymptotically optimal up to a logarithmic factor in the sense of the metric entropy. In addition to obtaining theoretical bounds on the metric entropy and $n$-widths, our results have the following notable features:
\begin{itemize}
\item[-] The computational scheme is based on a linear approximation operator to asymptotically match the optimal bounds in the sense of $n$-widths. %Linear quantization is applied to this operator to obtain  

\item[-] We give explicit schemes for converting the target function into a near minimal number of bits by combining the linear approximation operator with linear quantization, and we derive a reconstruction scheme from such bits to a prescribed accuracy.
\item[-] Our constructions can deal with both, spectral information as well as finitely many training data consisting of function evaluations at scattered data points.
\end{itemize}
In addition, we shall investigate on the asymptotics of the metric entropy of local approximation spaces. 

The outline of this paper is as follows: In Section \ref{sec:bits} we introduce the setting and define metric entropy and $n$-widths. The asymptotics of the metric entropy of global (and under additional assumptions also local) approximation spaces is determined in Section \ref{sec:metric}. In Section \ref{sec:sum kernel}, we introduce our approximation schemes for global approximation spaces and compute the asymptotics of the $n$-widths of global approximation spaces. In Section \ref{sec:wavelet quantization} we verify that linear quantization of the approximation scheme leads to optimal bit representations up to a logarithmic factor for the global approximation space. Local versions of approximation spaces are considered in Section \ref{sec:local}. 
For the readers convenience, \ref{sec assumptions} contains a list and brief discussion of the technical assumptions used for the main results of the present paper. 

\section{Approximation spaces and their metric entropy and $n$-widths}\label{sec:bits}
\subsection{Diffusion measure space}\label{sec:technicalities}
We first fix the setting and introduce some technical assumptions used throughout the paper. 
% let us recall that the classical heat kernel in $\R^d$ is the fundamental solution to the heat equation and given by 
%\begin{equation}\label{eq:Rn heat}
%G^{\R^d}_t(x,y)=\frac{1}{(4\pi t)^{d/2}}\exp(-\frac{\|x-y\|^2}{4t}).
%\end{equation}
%%Note that the term $(2\pi t)^{d/2}$ is the volume of a ball of radius $\sqrt{t}$. 
%The Laplacian on the unit sphere $\mathbb{S}^{d-1}$ induces the spherical heat kernel, which has the expansion
%\begin{equation}\label{eq:S heat}
%G^{\mathbb{S}^{d-1}}_t(x,y)=\sum_{k=0}^\infty \exp(-\lambda^2_k t) \varphi_k(x)\varphi_k(y),
%\end{equation}
%where $\{\varphi_k\}_{k=0}^\infty$ are eigenfunctions and $\{\lambda_k\}_{k=0}^\infty$ are eigenvalues of the (negative) Laplacian, i.e., the Laplace-Beltrami operator, on the sphere. Those eigenfunctions are called the spherical harmonics and form an orthonormal basis for $L_2(\mathbb{S}^{d-1})$, the space of square-integrable functions on $\mathbb{S}^{d-1}$. For simplicity, we restrict us to real-valued functions throughout. 
%We suppose that $\mathbb{X}$ is a compact Riemannian manifold without boundary. The spectral decomposition of the (negative) Laplace-Beltrami operator yields a sequence of nondecreasing eigenvalues $\{\lambda_k\}_{k=0}^\infty$ and smooth eigenfunctions $\{\varphi_k\}_{k=0}^\infty$ that form an orthonormal basis for $L_2(\mathbb{X})$, so that the corresponding heat kernel can formally be written as in \eqref{eq:S heat}.
%
Let $(\mathbb{X},\rho)$ be a quasi-metric space, i.e., a space with a nonnegative symmetric map $\rho$ satisfying $\rho(x,y)=0$ if and only if $x=y$, and the triangle inequality holds at least up to a constant factor $c>0$, i.e., 
\begin{equation*}
\rho(x,y)\leq c(\rho(x,z)+\rho(z,y)), \quad \text{ for all $x,y,z\in\mathbb{X}$.}
\end{equation*}
The quasi-metric $\rho$ induces a topology and we assume that $\mathbb{X}$ is endowed with a Borel probability measure $\mu$. The system $\{\varphi_k\}_{k=0}^\infty\subset L_2(\mathbb{X},\mu)$ is supposed to be an orthonormal basis of continuous real-valued functions with $\varphi_0\equiv 1$ and our results also involve a sequence of nondecreasing real numbers $\{\lambda_k\}_{k=0}^\infty$ such that $\lambda_0=0$ and $\lambda_k\rightarrow\infty$ as $k\rightarrow\infty$. Let $N$ be a positive integer and we shall restrict ourselves to $N=2^n$, where $n$ is some nonnegative integer. The space of \emph{diffusion polynomials} up to degree $N$ is $\Pi_N:=\spann\{\varphi_k:\lambda_k\leq  N \}$, and the \emph{generalized heat kernel} is
\begin{equation}\label{eq:heat}
G_t(x,y)=\sum_{k=0}^\infty \exp(-\lambda^2_k t) \varphi_k(x)\varphi_k(y), \quad t>0.
\end{equation}
We use the symbols $\lesssim$ when the left-hand side is bounded by a generic positive constant times the right-hand side, $\gtrsim$ is used analogously, and $\asymp$ means that both $\lesssim$ and $\gtrsim$ hold. Now, we can summarize the technical assumptions that are related to so-called upper and lower Gaussian bounds on the generalized heat kernel:
\begin{definition}[\cite{Chui:2013fk}]\label{dimmdef}
Under the above notation, a quasi-metric space $\XX$ is called a \emph{diffusion measure space} if there is some $\alpha>0$ such that each of the following properties is satisfied:
\begin{enumerate}[(i)]
\item\label{it: here 1} For each $x\in\XX$ and $t>0$, the closed ball $B_t(x)$ of radius $t$ at $x$  is compact and 
\begin{equation*}
\mu(B_t(x)) \lesssim t^\alpha, \qquad x\in\XX, \ t>0.
\end{equation*}
\item \label{it: here 2}There is $c>0$ such that 
\begin{equation*}
|G_t(x,y)|\lesssim t^{-\alpha/2}\exp\Big(-c\frac{\rho(x,y)^2}{t}\Big), \quad x, y\in \XX,\ 0<t\le 1.
\end{equation*}
\item\label{it: here 3} We have 
$ %\begin{equation*}
t^{-\alpha/2}\lesssim G_t(x,x), \quad x\in\XX, \ 0<t<1.
$ %\end{equation*}
\end{enumerate}
\end{definition}
%This definition relates $G_t(x,y)$ to the Euclidean heat kernel \eqref{eq:Rn heat}. In other words, the generalized heat kernel is supposed to describe some ``(artificial) heat propagation'' on the diffusion measure space $\mathbb{X}$ that resembles the one in Euclidean space when $d=\alpha$. 
From here on, we suppose that $\mathbb{X}$ is a diffusion measure space throughout the present paper. It is also noteworthy that the conditions of a diffusion measure space imply that $\mu(B_t(x))\asymp t^\alpha$, for all $0<t<1$, cf.~\cite{Filbir:2010aa}. Thus, the volume of small balls essentially behaves as in $\R^\alpha$. The above conditions also  imply the following estimate on the Christoffel function (or spectral function),
\begin{equation}\label{eq:chris}
\sum_{\lambda_k\leq t} |\varphi_k(x)|^2 \asymp t^\alpha,\quad x\in\mathbb{X}, \; t\geq 1,
\end{equation}
see \cite{Chui:2013fk,Filbir:2010aa,Filbir:2011fk} for a discussion and references. By integrating over $\mathbb{X}$, we obtain 
\begin{equation}\label{eq:dim formel und so}
\dim(\Pi_t)\asymp t^\alpha.
\end{equation}

% which is consistent with Weyl's law in case of a compact Riemannian manifold of dimension $\alpha$. 

%Later, we shall use that the conditions of a diffusion measure space imply the following estimate on the Christoffel function,
%\begin{equation}\label{eq:chris}
%\sum_{\lambda_k\leq N} |\varphi_k(x)|^2 \asymp N^\alpha,\quad x\in\mathbb{X}, \; N>0,
%\end{equation}
%see \cite{Filbir:2010aa} for a discussion and references. 

\begin{remark}\label{rem 2.2}
It was pointed out in \cite{Chui:2013fk} that all technical assumptions are satisfied when $\mathbb{X}\subset\R^d$ is an $\alpha$-dimensional compact, connected, Riemannian manifold without boundary, with non-negative Ricci curvature, geodesic distance $\rho$, and $\mu$ being the Riemannian volume measure on $\mathbb{X}$ normalized with $\mu(\mathbb{X})=1$, $\{\varphi_k\}_{k=0}^\infty$ are the eigenfunctions of the Laplace-Beltrami operator on $\mathbb{X}$, and $\{-\lambda_k\}_{k=0}^\infty$ are the corresponding eigenvalues arranged in nonincreasing order, see also \cite{Grigoryan:1999fk}. In this case, \eqref{eq:dim formel und so} is consistent with Weyl's law. 
%The same holds if the manifold has a smooth boundary and the eigenfunctions are associated to the Laplace-Beltrami operator with zero Neumann boundary conditions. Note that the lower bound in \eqref{it: here 3} would be violated with zero Dirichlet boundary conditions. 
For further discussions, we refer to \cite{Filbir:2010aa,Filbir:2011fk,Mhaskar:2010kx}.
%In fact, the constraint on the Ricci-curvature is used for the Bishop-Gromov inequality that yields \eqref{it: here 1}. The upper and lower Gaussian bounds \eqref{it: here 2} and \eqref{it: here 3} then hold for compact Riemannian manifolds. For further discussions, we refer to  and references therein.
\end{remark}
Given an arbitrary normed space $Z$ and a subset $Y\subset Z$, we define, for $f\in Z$,
\begin{equation}\label{eq:best error}
E(f,Y,Z) : = \inf_{g\in Y} \|f-g\|_Z.
\end{equation}
As we have pointed out already, we assume $\mathbb{X}$ is a diffusion measure space throughout. The notation $|B$ in the following means restricting functions to the set $B$. 
\begin{definition}\label{def:sob first one}
For a nonempty ball $B\subset\mathbb{X}$ and $1\leq p\leq \infty$, the \emph{approximation space} of order $s>0$ is defined by 
\begin{equation}\label{eq:def of Sobolev}
\mathcal{A}^s(L_p(B)) = \{f\in L_p(B) : \|f\|_{\mathcal{A}^s(L_p(B))}<\infty \},
\end{equation}
where the associated norm is given by 
\begin{equation*}
\|f\|_{\mathcal{A}^s(L_p(B))}:=\|f\|_{L_p(B)} + \sup_{N\geq 1} N^sE(f,\Pi_{N|B},L_p(B)).
\end{equation*} 
The unit ball in $\mathcal{A}^s(L_p(B))$ is denoted by 
\begin{equation}\label{eq:unit ball Sobolev}
\overline{\mathcal{A}^s}(L_p(B)):=\{f\in L_p(B): \|f\|_{\mathcal{A}^s(L_p(B))} \leq 1  \}.
\end{equation}
\end{definition}
For a nonempty ball $B\subset\mathbb{X}$, let us denote the closure of $\bigcup_N\Pi_{N|B}$ in $L_p(B)$ by $X_p(B)$. We can simply switch from $L_p(B)$ to $X_p(B)$ in the definition of approximation space without changing anything, so that we observe $\mathcal{A}^s(L_p(B)) = \mathcal{A}^s(X_p(B))$.

%In the above definition functions in $\Pi_N$ are simply identified with their restrictions to $B$ and $L_p(B)$ is meant with respect to the measure $\mu$ restricted to $B$. 
\begin{remark}
In classical situations, the smoothness of a function is related to the accuracy of its approximation, and for a more classical characterization of the approximation spaces in terms of pseudo-differential operators and K-functionals, we refer to \cite{Maggioni:2008fk}. In turn it has also become customary to consider the accuracy of approximation itself as a measurement of smoothness. For classical results on approximation spaces, we refer to \cite[Chapter 7]{DeVore:1993ab} and references therein. 
\end{remark}

%{\color{red}Can we really use $L_p$ here or do we need these $X^p$ spaces?}
%Since $W^s(L_p(B))$  is not compact, the concept of metric entropy cannot be applied directly. Instead, we shall consider the unit ball in $W^s(L_p(B))$, i.e., 
%\begin{equation*}
%\overline{W^s}(L_p(B)):=\{f\in W^s(L_p(B)): \|f\|_{W^s(L_p(B))} \leq 1  \}.
%\end{equation*}

%\begin{remark}
%Our results in Sections \ref{sec:metric entropy sobolev} and \ref{sec:wavelet quantization} would still hold in a more general situation, which we shall discuss in Section \ref{sec:general}. 
%%when the eigenfunctions $\{\varphi_k\}_{k=0}^\infty$ are replaced with some orthonormal basis for $L_2(\mathbb{X},\mu)$ satisfying few technical conditions, cf.~\cite{Filbir:2010aa,Maggioni:2008fk,Mhaskar:2010kx}. Here, we only allow Laplacian eigenfunctions to avoid such technicalities and keep the presentation short. 
%\end{remark}

%\subsection{Metric entropy for Sobolev spaces}\label{sec:metric entropy sobolev}
\subsection{Kolmogorov metric entropy and nonlinear $n$-widths}\label{sec:metric entropy sobolev}
Metric entropy as studied in \cite{Lorentz:1966fk} refers to the minimal number of bits needed to represent a function $f$ up to precision $\varepsilon$. This number determines the maximal compression when loss of information is bounded by $\varepsilon$. For a more stringent mathematical exposition, let $Y$ be a compact subset of a metric space $(Z,\varrho)$. Given $\varepsilon>0$, let $\mathcal{N}_\varepsilon(Y)$ be the \emph{$\varepsilon$-covering number of $Y$ in $Z$}, i.e., the minimal number of balls of radius $\varepsilon$ that cover $Y$. Suppose that $g_1,\ldots,g_{\mathcal{N}_\varepsilon(Y)}$ is a list of centers of these balls. Given any $f\in Y$, there is $g_j$ such that $\varrho(f,g_j)\leq \varepsilon$. We may then represent $f$ using the binary representation of $j$, and use $g_j$ as the reconstruction of $f$ based on this representation. Any binary enumeration of these centers takes $\lceil\log_2(\mathcal{N}_\varepsilon(Y))\rceil$ many bits, which somewhat measures the complexity of $Y$ and where $\lceil \cdot \rceil$ denotes the ceiling function. 
\begin{definition}
Let $Y$ be a compact subset of a metric space $(Z,\varrho)$ and, for $\varepsilon>0$, let $\mathcal{N}_\varepsilon(Y)$ be the $\varepsilon$-covering number of $Y$ in $Z$. Then 
\begin{equation}\label{eq:def entropy}
H_\varepsilon(Y,Z):=\log_2(\mathcal{N}_\varepsilon(Y))
\end{equation}
is called the \emph{metric entropy} of $Y$ in $Z$.
\end{definition}
Thus, the smallest integer not less than the metric entropy is the minimal number of bits necessary to represent any $f$ with precision $\varepsilon$. 

Let us also introduce some alternative notions of complexity. Given some Banach space $Z$, let $n\geq 1$ be an integer, and let $M_n:\R^n\rightarrow Z$ be some mapping, for which $Z_n:=M_n(\R^n)$ denotes its range.  For a compact subset $Y\subset Z$, we define the worst case error of approximation, consistently with \eqref{eq:best error}, by 
\begin{equation*}
E(Y,Z_n,Z) : = \sup_{f\in Y}E(f,Z_n,Z).
\end{equation*}
Endow $Y$ with some quasi-norm and consider continuous maps $\bar{a}:Y\rightarrow \R^n$. For fixed $\bar{a}$, the term $M_n(\bar{a}(f))\in Z$ is an approximation of $f$ from $Z_n$. The quantity 
\begin{equation*}
E(Y,\bar{a},M_n,Z) := \sup_{f\in Y} \|f-M_n(\bar{a}(f))\|_Z
\end{equation*}
is the error of approximation by the nonlinear method of approximation $M_n(\bar{a}(\cdot)):Y\rightarrow Z$. The \emph{continuous nonlinear $n$-width} is defined in \cite{DeVore:1989sp} by 
\begin{equation*}
d_n(Y,Z): = \inf _{\bar{a},M_n} E(Y,\bar{a},M_n,Z), 
\end{equation*}
where the infimum runs over all continuous maps $\bar{a}:Y\rightarrow \R^n$ and all mappings $M_n:\R^n\rightarrow Z$. For further considerations on $n$-widths, we refer to \cite{DeVore:1989sp,Dung:2013uq,Foucart:2010fk,Micchelli:1977kx,Nowak:1995ys,Traub:1980vn}. 

In the subsequent sections we shall compute the metric entropy \eqref{eq:def entropy} and the continuous nonlinear $n$-widths of the approximation ball $\overline{\mathcal{A}^s}(X_p(\mathbb{X}))$ of radius $1$ given by \eqref{eq:unit ball Sobolev}. We refer to \cite{Geller:2012fk} for related results.

\subsection{The metric entropy of approximation spaces}\label{sec:metric}
We shall determine the asymptotics of the metric entropy of global and local approximation spaces, so let $B\subset \mathbb{X}$ be some nonempty ball, which is allowed to coincide with $\mathbb{X}$. Since $\mathcal{A}^s(X_p(B))$ is not finite-dimensional, $\overline{\mathcal{A}^s}(X_p(B))$ is not compact in the approximation space. Here, we consider $\mathcal{A}^s(X_p(B))$ as a subspace of $X_p(B)$, in which it is compact, see \cite{Ehler:2013fkxx} for $B=\mathbb{X}$ and the case $B\subsetneq \mathbb{X}$  can be proven analogously.  

The following result for the approximation space extends findings in \cite{Mhaskar:2005fk} from the sphere to diffusion measure spaces. 
\begin{theorem}\label{th:op bit}
Let $\mathbb{X}$ be a diffusion measure space and suppose we are given a nonempty ball $B\subset\mathbb{X}$ such that $\{\varphi_{k|B}\}_{j=0}^\infty$ are linearly independent. If $s>0$ is fixed, then 
\begin{equation}\label{eq:entropy 0}
H_\varepsilon(\overline{\mathcal{A}^s}(X_p(B)),X_p(B)) \asymp  (1/\varepsilon)^{\alpha/s}, \quad \text{for all } 0<\varepsilon\leq 1, 
\end{equation} 
 where $\alpha$ is the constant in Definition \ref{dimmdef}.
\end{theorem}
This result was verified in \cite{Ehler:2013fkxx} provided that $B=\mathbb{X}$ and $p=\infty$. Note that the linear independence condition is trivially satisfied for $B=\mathbb{X}$ since $\{\varphi_{k}\}_{j=0}^\infty$ is even an orthonormal basis. In case $B$ is a proper subset of $\mathbb{X}$, the linear independence condition may be hard to check in general. However, if $\mathbb{X}$ is a compact, connected, real-analytic Riemannian manifold (equipped with a real analytic Riemannian metric), then the eigenfunctions $\{\varphi_{k}\}_{j=0}^\infty$ of the Laplace-Beltrami operator are real analytic due to the real analytic hypoellipticity of elliptic partial differential operators, cf.~\cite[Section 5.3]{Krantz:1992yg}. In this situation, the global linear independence implies the linear independence of the restrictions. For results related to Theorem \ref{th:op bit}, see \cite{Dung:2013uq,Nowak:1995ys,Pinkus:1985uq}. 

The proof of Theorem \ref{th:op bit} is based on a general Banach space result, also used in \cite[Theorem 4.1]{Mhaskar:2005fk}. Let $Z$ be a Banach space and $\{\phi_k\}_{k=1}^\infty\subset Z$ be a sequence of linearly independent elements whose linear span is dense in $Z$, and define $Z_k:=\spann\{\phi_1,\ldots,\phi_k\}$ with $Z_0=\{0\}$. Let $\{\delta_k\}_{k=0}^\infty$ be a nonincreasing sequence of positive numbers with $\lim_{k\rightarrow\infty}\delta_k=0$ and define
\begin{equation}\label{eq:approx space}
\mathcal{A}(Z;\{\delta_k\}_{k=0}^\infty,\{\phi_k\}_{k=1}^\infty): = \{f\in Z : E(f,Z_k,Z)\leq \delta_k,\;\text{for } k=0,1,\ldots\}.
\end{equation}
%where 
%\begin{equation*}
%E(f,X_k,X)) : = \inf_{g\in X_k} \|f-g\|_{X}.
%\end{equation*}
The following result goes back to Lorentz in \cite{Lorentz:1966fk}:
\begin{theorem}[Theorem 3.3 in \cite{Lorentz:1996uq}]\label{th:Lorentz}
Let $\{\delta_k\}_{k=0}^\infty$ be a nonincreasing sequence of positive numbers such that $\delta_{2k}\leq c \delta_k$, for $k=1,2,\ldots$ and some constant $c\in (0,1)$. For $\ell\geq 0$, let $M_\ell:=\min\{k : \delta_k\leq e^{-\ell}\}$, then we have, for $0<\varepsilon\leq 1$,
\begin{equation}\label{eq:Heps}
H_\epsilon\big(\mathcal{A}(Z;\{\delta_k\}_{k=0}^\infty,\{\phi_k\}_{k=1}^\infty),Z  \big)  \asymp \sum_{\ell=1}^L M_\ell,
\end{equation}
where $L:=2+\lfloor \log(1/\varepsilon) \rfloor$.
\end{theorem}
\begin{proof}[Proof of Theorem \ref{th:op bit}]
We aim to apply Theorem \ref{th:Lorentz} with the function system $\{{\varphi_{k|B}}\}_{k=0}^\infty$ and $Z=X_p(B)$. There, the index set is supposed to start with $k=1$, so we set $\phi_k={\varphi_{k-1|B}}$, $k=1,2,\ldots$. To define the sequence $\{\delta_k\}_{k=0}^\infty$, we need some preparation. The linear independence assumption yields that \eqref{eq:dim formel und so} implies $\dim(\Pi_{N|B})\asymp N^\alpha$. 
%Integrating \eqref{eq:chris} over $\mathbb{X}$ yields $\dim(\Pi_N)\asymp N^\alpha$. 
By using $Z_k:=\spann\{\varphi_{0|B},\ldots,\varphi_{k-1|B}\}$, we derive, for $N^\alpha\leq k\leq (2N)^\alpha$,
\begin{equation*}
(2N)^s E(f,\Pi_{2N|B},X_p(B)) \lesssim k^{s/\alpha}E(f,Z_k,X_p(B))  \lesssim N^s E(f,\Pi_{N|B},X_p(B)).
\end{equation*}
Therefore, there are constants $C_i\geq 1$, for $i=1,2$, such that the definitions $\delta_{1;0}=\frac{1}{2}$, $\delta_{1;k} := (2C_1)^{-1} k^{-s/\alpha}$, and $\delta_{2;0}=C_2$, $\delta_{2;k} := C_2 k^{-s/\alpha}$, lead to
\begin{equation*}
\mathcal{A}(X_p(B);\{\delta_{1;k}\}_{k=0}^\infty,\{\phi_k\}_{k=1}^\infty) \subset \overline{\mathcal{A}^s}(X_p(B))\subset \mathcal{A}(X_p(B);\{\delta_{2;k}\}_{k=0}^\infty,\{\phi_k\}_{k=1}^\infty),
\end{equation*}
which also yields
\begin{align*}
\mathcal{N}_\varepsilon\big(\mathcal{A}(X_p(B);\{\delta_{1;k}\}_{k=0}^\infty,\{\phi_k\}_{k=1}^\infty)\big) & \leq  \mathcal{N}_\varepsilon(\overline{\mathcal{A}^s}(X_p(B))) \\&\leq \mathcal{N}_\varepsilon\big(\mathcal{A}(X_p(B);\{\delta_{2;k}\}_{k=0}^\infty,\{\phi_k\}_{k=1}^\infty)\big).
\end{align*}
%The definition of the metric entropy yields 
%\begin{equation*}
%H_\varepsilon(\overline{W^s_1}(L_p(B)),X)=H_\varepsilon(\overline{W^s_1}(L_p(B)),L_p(B)).
%\end{equation*}
 Since $\delta_{i;2k}\leq c\delta_{i;k}$, for $c:=2^{-s/\alpha}\in(0,1)$, we can apply Theorem \ref{th:Lorentz}. According to \cite[Lemma 4.1]{Mhaskar:2005fk}, $\sum_{\ell=1}^L M_\ell  \asymp e^{L\alpha/s}$, so that the choice of $L$ in \eqref{eq:Heps} implies \eqref{eq:entropy 0}. 
\end{proof}

%
%and $p=\infty$, this result was verified in \cite{Ehler:2013fkxx}. In this case 
%
%We can extend this result to a local version, in which $L_\infty(\mathbb{X})$ is replaced with $L_p(B)$, where $1\leq p\leq \infty$ and $B\subset\mathbb{X}$ is a nontrivial ball. 

%This theorem is proved by applying classical approximation theoretic approaches. By following its lines, one observes that orthogonality of $\{\varphi_k\}_{k=0}^\infty$ is not used but just linear independence and we shall r. 
%
%
%
% and, in fact, linear independence of $\{\varphi_k\}_{k=0}^\infty$ is used in \cite{Ehler:2013fkxx}, see also \cite{Lorentz:1966fk,Lorentz:1996uq,Mhaskar:2005fk}, but not orthogonality, cf.~\cite[Remark 4]{Ehler:2013fkxx}. Therefore, if $s>0$ is fixed, $B$ is a nontrivial ball in $\mathbb{X}$, the function system $\{\varphi_{|B}\}_{j=0}^\infty$ is linearly independent on $B$, and $0<\varepsilon\leq r$, then 
%\begin{equation}\label{eq:entropy}
%H_\varepsilon(\overline{\mathcal{A}^s_r}(L_p(B)),L_p(B)) \asymp  (r/\varepsilon)^{\alpha/s}
%\end{equation} 
%holds. 
%
%
%This theorem still holds when $\mathbb{X}$ is replaced with a nontrivial ball $B\subset\mathbb{X}$ by following the lines in \cite{Ehler:2013fkxx}. 
%
%
%
%Note that the linear independence assumption is satisfied when dealing with a compact real-analytic manifold because then the eigenfunctions of the Laplace-Beltrami operator are analytic, Hence, any finite linear combination of $\{\varphi_k\}_{k=0}^\infty$ vanishes on $B$ if and only if it vanishes on $\mathbb{X}$. 

It is obvious that increased precision requires more bits, and smoother functions can be represented with fewer bits. The exact growth condition \eqref{eq:entropy 0} reflects these thoughts in a quantitative fashion and that \eqref{eq:entropy 0} serves as a benchmark for function representation on diffusion measure spaces. The remaining part of the present work is dedicated to develop a scheme that matches the optimality bound at least up to a logarithmic factor.

\section{Global function approximation}\label{sec:sum kernel}
This section is dedicated to introduce our approximation scheme, to discuss its characterization of certain function spaces, and to determine the asymptotics of the $n$-widths. 

\subsection{Localized kernels}
We now collect few ingredients for our approximation scheme.
\begin{definition}\label{def:filter}
We call an infinitely often differentiable and non-increasing function $H:\R_{\geq 0}\rightarrow\R$ a \emph{low-pass filter} if $H(t)=1$ for $t\le 1/2$ and $H(t)=0$ for $t\ge 1$.
\end{definition}
%A standard example of a low-pass filter is 
%\begin{equation*}
%H(x)=\begin{cases}  1,& x\leq 1/2,\\
%\exp(\frac{(x-\frac{1}{2})^2(2x^2-2x-1)}{x^2(x-1)^2}),& 1/2\leq x\leq 1,\\
%0,& 1\leq  x.
%\end{cases}
%\end{equation*}
For any low-pass filter $H$, the kernel 
\begin{equation}\label{eq:kernel def}
K_N(x,y) : = \sum_{k=0}^\infty H(\frac{\lambda_k}{N})\varphi_k(x)\varphi_k(y),
\end{equation}
cf.~\cite{Filbir:2010aa,Maggioni:2008fk,Mhaskar:2010kx}, is localized, i.e., for fixed $r>\alpha$ and all $x\neq y$ with $N=1,2,\ldots$,
\begin{equation}\label{eq:localization result!}
\big| K_N(x,y) \big| \lesssim  \frac{N^{\alpha-r}}{\rho(x,y)^r}.\end{equation} 
Alternatively, we also have for fixed $r>\alpha$ and all $x,y$ with $N=1,2,\ldots$,
\begin{equation}\label{eq:localization result!2}
\big| K_N(x,y) \big| \lesssim  \frac{N^{\alpha}}{\max(1,(N^r\rho(x,y)^r)}.
\end{equation}
We find in \cite[Inequality (3.12)]{Filbir:2010aa} that 
\begin{equation}\label{eq:boundedness by 1}
\sup_{y\in\mathbb{X}} \int_{\mathbb{X}} \big|K_N(x,y) \big|d\mu(x) \lesssim 1
\end{equation}
holds. 

For some signed Borel measure $\nu$ on $\mathbb{X}$ and $f\in L_1(\mathbb{X},|\nu|)$, we can define, for $N=1,2,\ldots$,
\begin{equation}\label{eq:sigma again and new}
\sigma_{N}(f,\nu) : = \sum_{k=0}^\infty H(\frac{\lambda_k}{N})\int_{\mathbb{X}} f(x) \varphi_k(x)\varphi_k d\nu(x)= \int_{\mathbb{X}} f(x) K_N(x,\cdot) d\nu(x).
\end{equation}

\subsection{Quadrature measures}
This section is dedicated to introduce further ingredients to develop our approximation scheme. We first aim to replace the integral over diffusion polynomials with a finite sum or at least with an integral over a ``simpler'' measure. 
\begin{definition}\label{def:quad}
 A signed Borel measure $\nu$ on $\mathbb{X}$ is called a \emph{quadrature measure} of order $N$ if 
\begin{equation*}
\int_{\mathbb{X}} f(x)g(x)d\mu(x) = \int_{\mathbb{X}} f(x)g(x)d\nu(x),\qquad\text{for all } f,g\in \Pi_{N}.
\end{equation*}
\end{definition}
While $\Pi_N$ is finite dimensional, the collection of products is so as well and, hence, there do exist finitely supported quadrature measure, cf.~\cite{Harpe:2005fk}, \cite[Section 1.5]{Rivlin:1974if}. Note that we request exact integration of products from functions in $\Pi_N$, which leads us to the following product assumption on the structure of diffusion polynomials, which is also used in \cite{Filbir:2011fk}: for $f\in L_p(\mathbb{X})$, let 
\begin{equation*}
\dist(f,\Pi_{N})_{L_\infty}:=\inf_{h\in\Pi_N} \|f-h\|_{L_\infty}
\end{equation*}
and assume that there is a constant $a\geq 2$ such that the quantity 
\begin{equation}\label{eq:def of eps}
\epsilon_N : = \sup_{\lambda_\ell,\lambda_k\leq N} \dist(\varphi_\ell \varphi_k,\Pi_{aN})_{L_\infty}
\end{equation}
satisfies $N^m\epsilon_N \rightarrow 0$ as $N\rightarrow \infty$, for all $m>0$. 

\begin{definition}\label{def:MZ}
Let the product assumption hold. For fixed $1\leq p\leq \infty$, a family $(\mu_N)_{N=1}^\infty$ of positive quadrature measures of order $N$, respectively,  is called a family of \emph{Marcinkiewicz-Zygmund quadrature measures} of order $N$, respectively, if
\begin{equation}\label{MZ-estimate 2}
\|f\|_{|\mu_N|,L_p(\mathbb{X})} \asymp \|f\|_{L_p(\mathbb{X})}, \quad\text{ for all $f\in\Pi_{N}$,}
\end{equation}
where $|\mu_N|$ denotes the total variation measure of $\mu_N$ and  $\|f\|_{|\mu_N|,L_p(\mathbb{X})}$ denotes the $L_p$-norm of $f$ with respect to $|\mu_N|$.
%
%
%
%a signed Borel measure $\nu$ on $\mathbb{X}$ is called a \emph{Marcinkiewicz-Zygmund measure} of order $N$ if  the $L_p$-norm $\|f\|_{|\nu|,L_p(\mathbb{X})}$ of $f$ with respect to $|\nu|$ satisfies
%\begin{equation}\label{MZ-estimate 2}
%\|fg\|_{|\nu|,L_p(\mathbb{X})} \asymp \|fg \|_{L_p(\mathbb{X})}, \quad\text{ for all $f,g\in\Pi_{N}$,}
%\end{equation}
%where $|\nu|$ denotes the total variation measure of $\nu$. 
%A signed Borel measure is called a \emph{Marcinkiewicz-Zygmund quadrature measure} of order $N$ if it is both a quadrature and a Marcinkiewicz-Zygmund measure of order $N$.
\end{definition}

%strong product assumption yields that 
%\begin{equation}\label{eq:weak product}
%\epsilon_N:=\sup_{\ell_j,\ell_k\leq 2N} \dist(\varphi_j\varphi_k,\Pi_{aN})_{L_\infty}
%\end{equation}
%is zero, for all $N$. We say that the \emph{weak product assumption} holds if the term $\epsilon_N$ decays for $N\rightarrow\infty$ faster than the inverse of any monomial, i.e., $N^m\epsilon_N\rightarrow 0$ as $N\rightarrow\infty$, for every $m>0$. The theory of localized summation kernels was extended to deal with this weaker version but we 

%The strong product assumption is only used in Section \ref{sec:wavelets and manifolds}, where we provide some supplementary material that relates localized summation kernels to the field of wavelet analysis.  
%\begin{definition}\label{def:uniform}
%For fixed $1\leq p\leq \infty$, a family $(\nu_N)_{N=1}^\infty$ of Marcinkiewicz-Zygmund (quadrature) measures of order $N$, respectively, is called \emph{uniform} if the generic constants in \eqref{MZ-estimate 2} can be chosen independently of $N$. 
%\end{definition}

Under fairly general assumptions, the results in \cite[Theorem 3.1]{Filbir:2010aa} and \cite[Theorem 5.8]{Filbir:2011fk} imply the existence of a family of  finitely supported Marcinkiewicz-Zygmund quadrature measures $(\mu_N)_{N=1}^\infty$ of order $N$, respectively,  such that $\#\supp(\mu_N) \asymp N^\alpha$, where $\alpha$ is as in Definition \ref{dimmdef}.

%The existence of uniform families of Marcinkiewicz-Zygmund quadrature measures are proven for fairly general smooth Riemannian manifolds in \cite{Filbir:2010aa,Filbir:2011fk}, where a construction procedure is outlined. Also, note that there are families of uniform Marcinkiewicz-Zygmund quadrature measures with finite support.

\subsection{Widths and characterization of approximation spaces}
%We have seen that $\sigma_N$ can be derived from a wavelet expansion. In this section, we shall verify that it can be used to characterize membership in approximation spaces. 

Fix $1\leq p\leq\infty$ and suppose that $(\mu_N)_{N=1}^\infty$ is a family of Marcinkiewicz-Zygmund quadrature measures of order $N$, respectively. According to \cite[Inequality (3.13)]{Filbir:2010aa}, we have %, for each $1\leq p\leq \infty$,
\begin{equation}\label{eq:sigma Lp}
 \|\sigma_N(f,\mu_N)\|_{L_p} \lesssim \|f\|_{|\mu_N|,L_p},\qquad f\in L_p(\mathbb{X},|\mu_N|),
\end{equation}
and the generic constant can be chosen independently of $f$ and $N$. Later, we shall also need that \eqref{MZ-estimate 2} extends to all functions in $X_p(\mathbb{X})$, i.e., that $\|f\|_{|\mu_N|,L_p}\lesssim \|f\|_{L_p(\mathbb{X})}$ holds, for all $f\in X_p(\mathbb{X})$. The latter is fine for $p=\infty$. For $1\leq p<\infty$, we have not yet found any explicit example except for the measure $\mu$ itself. Therefore, we shall simply restrict ourselves to $\mu_N=\mu$, $N=1,2,\ldots$ in this case.

We can also estimate 
\begin{equation}\label{eq:one estimate}
\sup_{y\in\mathbb{X}} \int_{\mathbb{X}}|K_N(x,y)| d|\mu_N|(x) \lesssim 1,
\end{equation}
cf.~\cite[Inequality (3.12)]{Filbir:2010aa}. Note that \eqref{eq:one estimate} is the quadrature version of \eqref{eq:boundedness by 1}. 
%The transition from one scale to the next is defined by 
%\begin{equation}\label{def tau}
%\tau_N(f,\mu_N):= \sigma_{N}(f,\mu_N) - \sigma_{N/2}(f,\mu_N). 
%\end{equation}
Next, we recall the characterization of $\mathcal{A}^s(L_p(\mathbb{X}))$ using $\sigma_N$, see \cite{Filbir:2010aa,Maggioni:2008fk,Mhaskar:2010kx,Mhaskar:2011uq}:
\begin{theorem}\label{th:smoothness}
Suppose that $1\leq p\leq \infty$ and assume that $(\mu_N)_{N=1}^\infty$ is a family of Marcinkiewicz-Zygmund quadrature measures of order $N$, respectively, if $p=\infty$. For $1\leq p<\infty$ we choose $\mu_N=\mu$, $N=1,2,\ldots$. Assume further that $H$ is a low-pass filter. Then, for all $f\in \mathcal{A}^s(L_p(\mathbb{X}))$, we have 
\begin{equation}\label{eq:smooth charact}
%\begin{split}
\|f-\sigma_N(f,\mu_N)\|_{L_p(\mathbb{X})} \lesssim N^{-s}\|f\|_{\mathcal{A}^s(L_p(\mathbb{X}))},
%\\
% \|\tau_N(f,\mu_N)\|_{L_p(\mathbb{X})} &\lesssim N^{-s}\|f\|_{\mathcal{A}^s(L_p(\mathbb{X}))},
 %\end{split}
\end{equation}
where the generic constant does not depend on $N$ or $f$. On the other hand, if, for $f\in L_p(\mathbb{X})$, there are generic constants not depending on $N$ such that 
\begin{equation*}
\|f-\sigma_N(f,\mu_N)\|_{L_p(\mathbb{X})} \lesssim N^{-s},
%\quad\text{or}\quad \|\tau_N(f,\mu_N)\|_{L_p(\mathbb{X})} \lesssim N^{-s},
\end{equation*}
then $f\in \mathcal{A}^s(L_p(\mathbb{X}))$.
\end{theorem}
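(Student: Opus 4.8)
The plan is to prove a Jackson (direct) inequality and a Bernstein (converse) inequality, both resting on three structural properties of the operator $\sigma_N(\cdot,\mu_N)$ from \eqref{eq:second}. First, since the low-pass filter $H$ vanishes on $[1,\infty)$, only indices with $\lambda_k\le N$ contribute, so $\sigma_N(f,\mu_N)\in\Pi_N$. Second, $\sigma_N$ reproduces $\Pi_{N/2}$: if $P\in\Pi_{N/2}$ and $H(\lambda_k/N)\ne0$, then $\lambda_k\le N$, so $\varphi_k\in\Pi_N$ and the strong product assumption gives $P\varphi_k^*\in\Pi_{aN}$; the quadrature property of $\mu_N$ then yields $\int_{\mathbb{X}}P\varphi_k^*\,d\mu_N=\langle P,\varphi_k\rangle$, and since $\langle P,\varphi_k\rangle=0$ for $\lambda_k>N/2$ while $H\equiv1$ on $[0,1/2]$, the sum collapses to $\sigma_N(P,\mu_N)=P$. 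Third, combining \eqref{eq:sigma Lp} with the bound $\|g\|_{|\mu_N|,L_p}\lesssim\|g\|_{L_p(\mathbb{X})}$, valid on $X_p(\mathbb{X})$, gives the uniform operator bound $\|\sigma_N(g,\mu_N)\|_{L_p(\mathbb{X})}\lesssim\|g\|_{L_p(\mathbb{X})}$ for $g\in X_p(\mathbb{X})$.

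For the direct estimates, fix $f\in W^s(L_p(\mathbb{X}))$ and choose $P\in\Pi_{N/2}$ with $\|f-P\|_{L_p(\mathbb{X})}\le2E(f,\Pi_{N/2},L_p(\mathbb{X}))$. By reproduction, $f-\sigma_N(f,\mu_N)=(f-P)-\sigma_N(f-P,\mu_N)$, so the uniform bound applied to $f-P\in X_p(\mathbb{X})$ gives $\|f-\sigma_N(f,\mu_N)\|_{L_p(\mathbb{X})}\lesssim\|f-P\|_{L_p(\mathbb{X})}\lesssim E(f,\Pi_{N/2},L_p(\mathbb{X}))$. The definition of the Sobolev norm bounds the last quantity by $(N/2)^{-s}\|f\|_{W^s(L_p(\mathbb{X}))}\lesssim N^{-s}\|f\|_{W^s(L_p(\mathbb{X}))}$. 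Writing $\tau_N(f,\mu_N)=(f-\sigma_{N/2}(f,\mu_N))-(f-\sigma_N(f,\mu_N))$ and applying this twice yields the companion bound for $\tau_N$.

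For the converse, the $\sigma_N$ statement is immediate: since $\sigma_N(f,\mu_N)\in\Pi_N$ we get $E(f,\Pi_N,L_p(\mathbb{X}))\le\|f-\sigma_N(f,\mu_N)\|_{L_p(\mathbb{X})}\lesssim N^{-s}$ along the dyadic scale $N=2^n$, and monotonicity of $M\mapsto E(f,\Pi_M,L_p(\mathbb{X}))$ transfers this to every $M\ge1$ with only a factor $2^s$ lost, so $\sup_{M\ge1}M^sE(f,\Pi_M,L_p(\mathbb{X}))<\infty$ and $f\in W^s(L_p(\mathbb{X}))$. For the $\tau_N$ statement I would reduce to this. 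Because $\tau_N(f,\mu_N)=\sigma_N(f,\mu_N)-\sigma_{N/2}(f,\mu_N)$ by \eqref{def tau} and the hypothesis $\|\tau_{2^m}(f,\mu_{2^m})\|_{L_p(\mathbb{X})}\lesssim2^{-ms}$ is summable, the partial sums $\sigma_1(f,\mu_1)+\sum_{m=1}^{n}\tau_{2^m}(f,\mu_{2^m})$ form a convergent sequence in $L_p(\mathbb{X})$. When these sums telescope to $\sigma_{2^n}(f,\mu_{2^n})$, their limit has the same generalized Fourier coefficients $\langle f,\varphi_k\rangle$ as $f$ (because $H(\lambda_k/N)\to1$), so completeness of $\{\varphi_k\}$ identifies it with $f$ and gives $\|f-\sigma_{2^n}(f,\mu_{2^n})\|_{L_p(\mathbb{X})}\le\sum_{m>n}\|\tau_{2^m}(f,\mu_{2^m})\|_{L_p(\mathbb{X})}\lesssim2^{-ns}$; the $\sigma_N$ case then finishes the argument.

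The main obstacle is the clause \emph{when these sums telescope}. For $1\le p<\infty$ we set $\mu_N=\mu$ for every $N$, the operators $\sigma_N(\cdot,\mu)$ telescope exactly, and the reduction above is complete. For $p=\infty$ the quadrature measures vary with the scale, so $\sigma_{N/2}(f,\mu_N)$ and $\sigma_{N/2}(f,\mu_{N/2})$ differ in general and the telescope leaves residual quadrature-mismatch terms that must be absorbed. I expect these to be controlled by the uniform Marcinkiewicz--Zygmund and localization bounds \eqref{eq:sigma Lp} and \eqref{eq:one estimate}, exactly as in the localized-kernel analysis of \cite{Filbir:2010aa,Maggioni:2008fk,Mhaskar:2010kx,Mhaskar:2011uq}, after which the identification of the limit and the reduction to the $\sigma_N$ inequality go through as before; this is the step I anticipate to be the most delicate.
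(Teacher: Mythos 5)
Your proposal reconstructs the standard argument behind this theorem. Note first that the paper itself offers no proof of Theorem \ref{th:smoothness}: it is quoted from \cite{Filbir:2010aa,Maggioni:2008fk,Mhaskar:2010kx,Mhaskar:2011uq}, so the comparison here is with that literature rather than with an in-paper argument. Your three structural facts (range in $\Pi_N$, reproduction of $\Pi_{N/2}$ via the strong product assumption and the quadrature property, uniform $L_p$-boundedness via \eqref{eq:sigma Lp} and $\|\cdot\|_{|\mu_N|,L_p}\lesssim\|\cdot\|_{L_p}$) are exactly the ingredients those references use, and your direct estimates, the $\tau_N$ bound by differencing, and the converse from $\sigma_N$-decay are complete and correct. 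The converse from $\tau_N$-decay in the case $\mu_N=\mu$ (i.e.\ $1\le p<\infty$) is also fine, modulo the identification of the $L_p$-limit with $f$, which is standard for $f$ in $X_p(\mathbb{X})$.

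The genuine gap is the one you flag yourself: the $\tau_N$-converse for $p=\infty$ with scale-dependent measures. It is worth being concrete about why the step you "anticipate to be delicate" does not close with the tools you name. The telescoping residue at scale $m$ is $\sigma_{2^{m-1}}(f,\mu_{2^m})-\sigma_{2^{m-1}}(f,\mu_{2^{m-1}})$; since $f$ is not a diffusion polynomial, the quadrature property does not make this vanish. The only generic bound available from uniform boundedness and reproduction is obtained by subtracting a near-best approximant $P\in\Pi_{2^{m-2}}$, which yields a bound of order $E(f,\Pi_{2^{m-2}},L_\infty)$ --- precisely the quantity the converse is trying to estimate --- so the resulting recursive inequality $e_{n}\lesssim 2^{-ns}+\sum_{m\ge n-1}e_m$ does not by itself produce the rate $e_n\lesssim 2^{-ns}$. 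Closing this requires an additional idea (in the cited works the $\tau$-converse is either proved with a single fixed measure, or routed through a summation-by-parts/frame argument that controls the cross terms differently), so as written your proof establishes the theorem in full only for $1\le p<\infty$ and establishes, for $p=\infty$, the direct inequalities and the $\sigma_N$-converse but not the $\tau_N$-converse.
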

%The estimates in \eqref{eq:smooth charact} make only sense when $f$ is contained in $W^s(L_p(\mathbb{X}))$, because otherwise the right-hand sides are infinity. 

\begin{remark}
Let  us point out again that we suppose $\mu_N=\mu$, $N=1,2,\ldots$  for  $1\leq p<\infty$. In this case, the term $\sigma_N(f,\mu_N)$ contains spectral information  $\int_{\mathbb{X}} f(x) \varphi_k(x)d\mu(x)$. If $p=\infty$ and $\mu_N$ has finite support, then we have an approximation scheme that uses finitely many training data consisting of function evaluations at scattered data points in $\supp(\mu_N)$.
\end{remark}

We have already determined the asymptotics of the Kolmogorov metric entropy. Here, we shall determine the $n$-widths for the global approximation space. 
\begin{theorem}\label{th:nwidth}
The continuous nonlinear $n$-widths of $\overline{\mathcal{A}^s}(X_p(\mathbb{X}))$ in $X_p(\mathbb{X})$ satisfies 
\begin{equation*}
d_n(\overline{\mathcal{A}^s}(X_p(\mathbb{X})),X_p(\mathbb{X})) \asymp n^{-s/\alpha}.
%\mathscr{K}_n \asymp \mathscr{L}_n\asymp \mathscr{G}_n \asymp \mathscr{B}_n .
\end{equation*}
%{\color{red}die Indizes $n$ und $N$ passen noch nicht zusammen. Kann man aber sicher passend machen!}
\end{theorem}
In order to verify Theorem \ref{th:nwidth} we shall consider two more types of $n$-widths, cf.~\cite{Pinkus:1985kx}. 
The \emph{linear $n$-width of a subset $Y$ in a Banach space $Z$} is
\begin{equation*}%\label{eq:def nwidth 2}
\mathscr{L}_n(Y,Z):=\inf_{F_n} \sup_{x\in Y}\|x-F_n(x)\|,
\end{equation*}
where the infimum is taken over all bounded linear operators $F_n$ on $Z$ whose range is of dimension at most $n$. The \emph{Bernstein $n$-width of $Y$ in $Z$} is
\begin{equation*}%\label{eq:def nwidth 2}
\mathscr{B}_n(Y,Z):=\sup_{Z_{n+1}} \sup\{\lambda: \lambda \overline{Z_{n+1}}\subset Y\}
\end{equation*}
where the supremum is taken over all subspaces $Z_{n+1}$ of $Z$ of dimension at least $n+1$ and $\overline{Z_{n+1}}$ denotes the unit ball in $Z_{n+1}$. The continuous nonlinear $n$-widths is sandwiched by 
\begin{equation}\label{eq:sandwich}
\mathscr{B}_n(Y,Z) \leq d_n(Y,Z)\leq \mathscr{L}_n(Y,Z),
\end{equation}
cf.~\cite{DeVore:1989sp}. 
We can now take care of the proof. We point out that we shall derive a lower bound on the Bernstein $n$-width, which is closely related to the Bernstein estimates in \cite{Maggioni:2008fk} for integer derivatives when combined with K-functionals, cf.~\cite{DeVore:1989sp}. 
\begin{proof}[Proof of Theorem \ref{th:nwidth}]
%According to \cite[Theorem 1.1]{Pinkus:1985kx}, we have the following relationships among the four widths:
%\begin{equation*}
%\mathscr{L}_n \geq \mathscr{K}_n, \mathscr{G}_n \geq  \mathscr{B}_n.
%\end{equation*}
The operator $\sigma_N$ is bounded on $L_p(\mathbb{X})$ and $\sigma_N(f)$ is an element in $\Pi_N$. Consider $N$ with $N\asymp n^{1/\alpha}$, so that $\dim(\Pi_N)\asymp N^\alpha$ yields
\begin{equation*}
\mathscr{L}_n(\overline{\mathcal{A}^s}(X_p(\mathbb{X})),X_p(\mathbb{X}))\lesssim n^{-s/\alpha},
\end{equation*}
cf.~Theorem \ref{th:smoothness} . Hence, the second inequality in \eqref{eq:sandwich} implies $d_n(\overline{\mathcal{A}^s}(X_p(\mathbb{X})),X_p(\mathbb{X}))\lesssim n^{-s/\alpha}$.

To establish the lower bound, we take the subspace $\Pi_{N+1}$ and aim to derive a generic constant $c>0$ such that $cN^{-s}\overline{\Pi_{N+1}}\subset  \overline{\mathcal{A}^s}(X_p(\mathbb{X}))$, where $\overline{\Pi_{N+1}}=\{f\in\Pi_{N+1}: \|f\|_{L_p(\mathbb{X})}\leq 1\}$. For $f\in\overline{\Pi_{N+1}}$, let $g:=N^{-s}f$. We obtain $\|g\|_{L_p(\mathbb{X})}\leq N^{-s}$ and, for $M>N$, we derive $E(g,\Pi_M,L_p(\mathbb{X}))=0$. The choice $M\leq N$ yields
\begin{align*}
E(g,\Pi_M,L_p(\mathbb{X})) = N^{-s} \|f-\sigma_M(f)\|_{L_p(\mathbb{X})}
 \lesssim N^{-s},
\end{align*}
because $\|f\|_{L_p(\mathbb{X})}\leq 1$ and $\|\sigma_M(f)\|_{L_p(\mathbb{X})}\lesssim \|f\|_{L_p(\mathbb{X})}$. 
Thus, 
\begin{equation*}
\sup_{M\geq 1} M^sE(g,\Pi_M,L_p(\mathbb{X})) \lesssim 1
\end{equation*}
 holds. We have verified that there is a generic constant $c$ such that 
 \begin{equation*}
 cN^{-s}\overline{\Pi_{N+1}} \subset \overline{\mathcal{A}^s}(L_p(\mathbb{X})).
 \end{equation*}
 Therefore, we obtain $\mathscr{B}_n(\overline{\mathcal{A}^s}(X_p(\mathbb{X})),X_p(\mathbb{X}))\gtrsim n^{-s/\alpha}$, so that \eqref{eq:sandwich} concludes the proof.
\end{proof}
It should be mentioned that upper bounds on linear $n$-widths  for compact Riemannian manifolds were already derived in \cite{Geller:2013uk}, where also the exact asymptotics were obtained for compact homogeneous manifolds. 
%The Kolmogorov $n$-width for Besov spaces on the sphere was studied in \cite{Brown:2002fk}.

\section{Bit representation in global approximation spaces} \label{sec:wavelet quantization}
This section is dedicated to verify that linear quantization of the approximation scheme $\sigma_N(f,\mu_N)$ enables bit representations matching the optimality bounds stated in Theorem \ref{th:op bit} up to a logarithmic factor. First, we recall the formula \eqref{eq:sigma again and new},
\begin{equation*}
\sigma_N(f,\mu_N)=
%\sum_{y\in\mathcal{C}_N} \sum_{j=-1}^J\sum_{x\in\mathcal{C}_N} \omega_{x} f(x)\widetilde{\Psi}_{j,y}(x) \Psi_{j,y} 
\sum_{k=0}^\infty H(\frac{\lambda_k}{N})\int_{\mathbb{X}} f(x) \varphi_k(x)\varphi_kd\mu_N(x),
\end{equation*}
where $(\mu_N)_{N=1}^\infty$ is a family of Marcinkiewicz-Zygmund quadrature measures of order $N$, respectively, if $p=\infty$. Again, if $1\leq p<\infty$, then we choose $\mu_N=\mu$, $N=1,2,\ldots$. 

Since $H(t)=1$, for $t\in[0,1/2]$ and $H(t)=0$, for $t>1$, we observe that $H(\frac{\lambda_k}{N})H(\frac{\lambda_k}{2N})=H(\frac{\lambda_k}{N})$. If $(\nu_N)_{N=1}^\infty$ is a family of quadrature measures of order $2N$, respectively, then a straight-forward calculation yields
\begin{equation}\label{eq:for later use}
\sigma_N(f,\mu_N) = \int_{\mathbb{X}}\sigma_N(f,\mu_N,x)\sum_{k=0}^\infty H(\frac{\lambda_k}{2N})\varphi_k(x)d\nu_N(x)\varphi_k,
\end{equation}
The representation \eqref{eq:for later use} involves the quadrature measure $\nu_N$ and the Marcinkiewicz-Zygmund quadrature measure $\mu_N$. To design the final approximation scheme, we fix some $S>1$, apply the quantization
\begin{equation}\label{eq:quantization2}
I_N(f,\mu_N,x)=\lfloor N^{S}\sigma_N (f,\mu_N,x) \rfloor,%\quad\text{for $y\in\supp(\nu_N)$,}
\end{equation}
 and define the actual approximation by 
\begin{equation}\label{eq:discretized approx operator2}
\sigma^\circ_N(f,\mu_N,\nu_N):= N^{-S} \int_{\mathbb{X}} I_N(f,\mu_N,x) \sum_{k=0}^\infty H(\frac{\lambda_k}{2N})\varphi_k(x)d\nu_N(x)\varphi_k.
\end{equation}
In other words, we replace $\sigma_N (f,\mu_N,x)$ in \eqref{eq:for later use} with a number on the grid $\frac{1}{N^S}\Z $. % and suppress the $\mathcal{O}(N^{-S})$ term when performing the reconstruction. 

%a sequence of samples $(\mathcal{C}_N)_{N=1}^\infty$ and the corresponding sequence of weights $(\{ \omega_y\}_{y\in\mathcal{C}_N})_{N=1}^\infty$ forming a quadrature rule of order $aN$. 
We have the following result for the ball $\overline{\mathcal{A}^s}(L_p(\mathbb{X}))$ of radius $1$ of the global approximation space given by \eqref{eq:def of Sobolev}. It extends results in \cite{Ehler:2013fkxx} from compact Riemannian manifolds and $p=\infty$ to diffusion measure spaces and to the entire range $1\leq p\leq\infty$:
\begin{theorem}\label{th:alles22}
For the case $p=\infty$, we assume that $(\mu_N)_{N=1}^\infty$ is a family of Marcinkiewicz-Zygmund quadrature measures of order $N$, respectively. For $1\leq p<\infty$ we choose $\mu_N=\mu$, $N=1,2,\ldots$. Assume further that $H$ is a low-pass filter. We suppose that $(\nu_N)_{N=1}^\infty$ are Marcinkiewicz-Zygmund quadrature measures of order $2N$ with $\#\supp(\nu_N) \lesssim N^\alpha$. For fixed $s>0$ and $S>\max(1,s)$, we apply the discretizations \eqref{eq:quantization2} and \eqref{eq:discretized approx operator2}. Then there is a constant $c>0$ such that, for all $f\in \overline{\mathcal{A}^s}(L_p,\mathbb{X})$,
\begin{equation}\label{eq:approx inequality in theorem2}
\|f-\sigma^\circ_N(f,\mu_N,\nu_N)\|_{L_p(\mathbb{X})} \leq c  N^{-s}
\end{equation}
holds. For $cN^{-s} = \varepsilon \leq 1$ and $\varepsilon\leq 1$, the number of bits needed to represent all integers $\{I_N(f,\mu_N,x): x\in  \supp(\nu_N) \}$ does not exceed a positive constant (independent of $\varepsilon$) times 
\begin{equation}\label{eq:almost optimal2}
(1/\varepsilon)^{\alpha/s} (1+\log_2(1/\varepsilon) ).
\end{equation}
\end{theorem}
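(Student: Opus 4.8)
The plan is to split the claim into two essentially independent parts: the \emph{approximation bound} \eqref{eq:approx inequality in theorem2} and the \emph{bit-count bound} \eqref{eq:almost optimal2}. For the approximation bound I would insert the undiscretized operator $\sigma_N(f,\mu_N)$ as an intermediate term and use the triangle inequality
\begin{equation*}
\|f-\sigma^\circ_N(f,\mu_N,\nu_N)\|_{L_p} \leq \|f-\sigma_N(f,\mu_N)\|_{L_p} + \|\sigma_N(f,\mu_N)-\sigma^\circ_N(f,\mu_N,\nu_N)\|_{L_p}.
\end{equation*}
The first term is already controlled by Theorem \ref{th:smoothness}, giving $\lesssim N^{-s}\|f\|_{W^s(L_p)} \leq rN^{-s}$. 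The second (the quantization error) is the crux: subtracting \eqref{eq:discretized approx operator2} from \eqref{eq:for later use}, the two integrands share the kernel factor $\sum_k H(\tfrac{\lambda_k}{2N})\varphi^*_k(y)\varphi_k = K_{2N}(\cdot,y)$, and they differ only by replacing $\sigma_N(f,\mu_N,y)$ with $N^{-S}\lfloor N^S\sigma_N(f,\mu_N,y)\rfloor$. Since $\lvert t - N^{-S}\lfloor N^S t\rfloor\rvert \le N^{-S}$ pointwise, the difference operator is $\int_\mathbb{X} e_N(y)K_{2N}(\cdot,y)\,d\nu_N(y)$ with $\lvert e_N(y)\rvert \le N^{-S}$.

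\textbf{Key steps for the approximation bound.}
To bound this difference in $L_p$ I would use the kernel estimate \eqref{eq:one estimate} (or \eqref{eq:boundedness by 1}) together with the $L_p$-boundedness of the operator $g\mapsto \int_\mathbb{X} g(y)K_{2N}(\cdot,y)\,d\nu_N(y)$ on diffusion polynomials, exactly as in \eqref{eq:sigma Lp}. This yields
\begin{equation*}
\|\sigma_N(f,\mu_N)-\sigma^\circ_N(f,\mu_N,\nu_N)\|_{L_p} \lesssim \|e_N\|_{|\nu_N|,L_p} \lesssim N^{-S}\,\lvert\nu_N\rvert(\mathbb{X})^{1/p} \lesssim N^{-S},
\end{equation*}
using that $\lvert\nu_N\rvert(\mathbb{X})$ is uniformly bounded (a quadrature measure reproduces $\varphi_0\equiv 1$, so $\nu_N(\mathbb{X})=1$, and Marcinkiewicz-Zygmund gives $\lvert\nu_N\rvert(\mathbb{X})\lesssim 1$). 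Since $S>s$, the term $N^{-S}$ is dominated by $N^{-s}$, so the whole expression is $\lesssim rN^{-s}$, establishing \eqref{eq:approx inequality in theorem2}.

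\textbf{Key steps for the bit count.}
For the second part I would bound the magnitude of each integer $I_N(f,\mu_N,y)=\lfloor N^S\sigma_N(f,\mu_N,y)\rfloor$. Using \eqref{eq:sigma Lp} in the $L_\infty$ sense (or the sup-norm bound on $\sigma_N$ via \eqref{eq:one estimate}), one gets $\lvert\sigma_N(f,\mu_N,y)\rvert \lesssim \|f\|_{|\mu_N|,L_\infty}\lesssim \|f\|_{W^s} \le r$ uniformly in $y$, whence each $\lvert I_N\rvert \lesssim N^S r$ and storing one integer costs $O(1+\log_2(N^S r)) = O(1+\log_2 N + \log_2 r)$ bits. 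There are $\#\supp(\nu_N)\lesssim N^\alpha$ such integers, so the total bit count is $\lesssim N^\alpha(1+\log_2 N + \log_2 r)$. Finally I substitute the relation $crN^{-s}=\varepsilon$, i.e.\ $N \asymp (r/\varepsilon)^{1/s}$, which gives $N^\alpha \asymp (r/\varepsilon)^{\alpha/s}$ and $\log_2 N \asymp \tfrac{1}{s}\log_2(r/\varepsilon)$; combining the logarithmic factors (and noting $\varepsilon\le r$ keeps $\log_2(r/\varepsilon)\ge 0$) yields exactly \eqref{eq:almost optimal2}. The main obstacle is the quantization-error estimate: one must verify that the discretized kernel operator against $\nu_N$ is genuinely $L_p$-bounded with a constant independent of $N$, which relies on $\nu_N$ being a Marcinkiewicz-Zygmund measure so that \eqref{eq:one estimate} applies with $\nu_N$ in place of $\mu_N$; I would be careful that the error function $e_N$ need not be a diffusion polynomial, so the boundedness must be argued through the kernel bound \eqref{eq:one estimate} directly rather than through the Marcinkiewicz-Zygmund norm equivalence \eqref{MZ-estimate 2}.
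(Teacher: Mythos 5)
Your overall strategy coincides with the paper's: triangle inequality through $\sigma_N(f,\mu_N)$, Theorem \ref{th:smoothness} for the first term, the pointwise quantization error $\lvert\sigma_N(f,\mu_N,y)-N^{-S}I_N(f,\mu_N,y)\rvert\le N^{-S}$ combined with \eqref{eq:for later use} and \eqref{eq:one estimate} for the second, and then a count of $\lesssim N^\alpha$ integers each costing $O(\log_2 N+\log_2 r+1)$ bits. The approximation bound \eqref{eq:approx inequality in theorem2} is handled correctly and essentially identically to the paper.

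There is, however, a genuine flaw in your bit-count step. You claim $\lvert\sigma_N(f,\mu_N,y)\rvert\lesssim\|f\|_{|\mu_N|,L_\infty}\lesssim\|f\|_{W^s}\le r$ uniformly in $y$. This chain is only valid for $p=\infty$: for $1\le p<\infty$ and small $s$, a function $f\in\overline{W^s_r}(L_p(\mathbb{X}))$ need not be bounded, so $\|f\|_{L_\infty}$ cannot be controlled by $\|f\|_{W^s(L_p(\mathbb{X}))}$, and the uniform bound $\lvert I_N(f,\mu_N,y)\rvert\lesssim rN^S$ fails as stated. The paper closes this gap with the Nikolskii-type inequality $\|g\|_{L_\infty}\lesssim N^\alpha\|g\|_{L_1}$ for $g\in\Pi_N$ (a consequence of the localization estimate \eqref{eq:localization result!2}), applied to $g=\sigma_N(f,\mu_N)\in\Pi_N$ together with $\|\sigma_N(f,\mu_N)\|_{L_p}\lesssim\|f\|_{L_p}\le r$ and $L_p\hookrightarrow L_1$; this gives $\lvert I_N(f,\mu_N,y)\rvert\lesssim rN^{S+\alpha}$. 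The extra factor $N^\alpha$ sits inside the logarithm, so the final count $(r/\varepsilon)^{\alpha/s}(1+\log_2(r/\varepsilon))$ is unchanged, but the step as you wrote it would not survive for $p<\infty$. The remainder of your substitution $N\asymp(r/\varepsilon)^{1/s}$ and the assembly of the logarithmic factors matches the paper.
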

\begin{proof}[Proof of Theorem \ref{th:alles22}]
%A rescaling argument yields that we we can restrict our considerations to $r=1$ when taking care of \eqref{eq:approx inequality in theorem}. 
The triangle inequality yields 
\begin{align*}
\|f-\sigma^\circ_N(f,\mu_N,\nu_N)\|_{L_p(\mathbb{X})} &\lesssim  \|f-\sigma_N(f,\mu_N)\|_{L_p(\mathbb{X})} + \|\sigma_N(f,\mu_N)-\sigma^\circ_N(f,\mu_N,\nu_N) \|_{L_p(\mathbb{X})} .
%& \lesssim rN^{-s}+rN^{-S}\lesssim  rN^{-s},
\end{align*}
Since Theorem \ref{th:smoothness} implies $\|f-\sigma_N(f,\mu_N)\|_{L_p(\mathbb{X})}\lesssim N^{-s}\|f\|_{\mathcal{A}^s(L_p(\mathbb{X}))}$, we only need to take care of the term on the far most right. The quantization \eqref{eq:quantization2} immediately yields 
 %with \eqref{eq:for later use} and \eqref{eq:one estimate} yield
\begin{equation}\label{eq:quantization immediately yields2}
|\sigma_N(f,\mu_N,x)-N^{-S}I_N(f,\mu_N,x)|\leq N^{-S},\quad\text{for all $x\in\supp(\nu_N)$,}
\end{equation}
so that \eqref{eq:for later use} and \eqref{eq:one estimate} imply
\begin{align*}
\|\sigma_N(f,\mu_N)-\sigma^\circ_N(f,\mu_N,\nu_N)\|_{L_p(\mathbb{X})} & \\
&\hspace{-1.5cm}= \big\|\int_\mathbb{X} \Big( \sigma_N(f,\mu_N,y) -N^{-S}I_N(f,\mu_N,y)\Big)K_{2N}(\cdot,y)d\nu_N(y) \big\|_{L_p(\mathbb{X})} \\
&\hspace{-1.5cm} \lesssim N^{-S}\leq N^{-s}.
\end{align*}
Hence, we have derived \eqref{eq:approx inequality in theorem2}.

To tackle \eqref{eq:almost optimal2}, we observe that the localization property \eqref{eq:localization result!2} yields $\|g\|_{L_\infty}\lesssim N^\alpha\|g\|_{L_1}$, for all $g\in\Pi_{N}$, see also \cite[Lemma 5.5]{Mhaskar:2010kx} for more general Nikolskii inequalities. We apply \eqref{eq:quantization immediately yields2} and then use $\sigma_N(f,\mu_N)\in\Pi_N$ with $L_p \hookrightarrow L_1$,
%the Nikolskii inequalities in Lemma \ref{lemma:Niko} for $p=1$ and $q=\infty$, 
which yields 
\begin{equation*}
| I_N(f,\mu_N,x) | \lesssim N^S \| \sigma_N(f,\mu_N) \|_{L_\infty(\mathbb{X})}  \lesssim N^{S+\alpha}\|\sigma_N(f,\mu_N) \|_{L_p(\mathbb{X})}.
\end{equation*}
According to \cite[Theorem 5.1]{Mhaskar:2010kx}, $\|\sigma_N(f,\mu_N) \|_{L_p(\mathbb{X})}\lesssim \|f\|_{L_p(\mathbb{X})}$ holds. Since $f$ is contained in the ball of radius $1$, so that $\|f\|_{L_p(\mathbb{X})}\leq 1$, we see that 
\begin{equation*}
| I_N(f,\mu_N,x) |\lesssim  N^{S+\alpha}.
\end{equation*}
Thus, the number of bits needed to represent each $I_N(f,\mu_N,x)$ is at most $\log_2(c_1N^{S+\alpha})$, where $c_1\geq 1$ is a positive constant. Note that we can assume that $c_1N^{S+\alpha}\geq 1$ because otherwise $ I_N(f,\mu_N,x)$ would be zero. 
Since  $\# \supp(\nu_N) \lesssim N^\alpha$, we have $\#\{I_N(f,\mu_N,x) : x\in\supp(\nu_N)\}\lesssim N^\alpha$. Therefore, the total number of bits needed to represent all numbers $\{I_N(f,\mu_N,x) : x\in\supp(\nu_N)\}$ is at most $c_2 N^\alpha\log_2(c_1N^{S+\alpha})$, where $c_2$ is a positive constant. By using $cN^{-s}=\varepsilon\leq 1$ and $\varepsilon\leq 1$, we derive that the number of necessary bits does not exceed 
\begin{align*}
c_2 c^{\alpha/s}(1/\varepsilon)^{\alpha/s} \log_2(c_1 (c/\varepsilon)^{(S+\alpha)/s} ) &\lesssim 
(1/\varepsilon)^{\alpha/s} \log_2((c_1)^{s/(S+\alpha)}c/\varepsilon)\\
& \lesssim (1/\varepsilon)^{\alpha/s} \log_2((c_1/\varepsilon)^{s/(S+\alpha)}c/\varepsilon)\\
&\lesssim (1/\varepsilon)^{\alpha/s} (1+\log_2(1/\varepsilon)),
%& \lesssim (r/\varepsilon)^{\alpha/s} \log_2()
%\lesssim 
%c_2 1/s \log_2(cr^{1+s/(S+\alpha)}/\varepsilon) (cr/\varepsilon)^{\alpha/s},
\end{align*}
which concludes the proof. 
\end{proof}
Theorem \ref{th:alles22} yields that our bit representation scheme is optimal with respect to the metric entropy as stated in Theorem \ref{th:op bit} at least up to a logarithmic factor. 

\section{Bit representation of locally smooth functions}\label{sec:local}
In the previous section we derived a bit-representation scheme for the global approximation space, i.e.,  $B=\mathbb{X}$. It turns out that the case $B\subsetneq \mathbb{X}$ is more involved because we do not have results that characterize $\mathcal{A}^s(L_p(B))$ by means of $\sigma_N$. In fact, $\sigma_N$ requires functions to be defined globally so that one would be forced to deal with boundary effects. On the other hand, $B$ itself may not be a diffusion measure space satisfying all required assumptions. We circumvent such difficulties by dealing with a modified  approximation space, for which we can construct a bit representation scheme.

\subsection{Characterization of local smoothness by local approximation rates}
%Although $\sigma_N$ and $\tau_N$ can be used to study functions locally as we shall show in the following, we must point out that their approximation properties and decay rates, respectively, may not fully characterize membership in $W^s(L_p(B))$ as introduced in Definition \ref{def:sob first one} unless $B=\mathbb{X}$. Therefore, we derive two other  Sobolev space that resemble $W^s(L_p(B))$ and are more suitable. 
Before we can discuss local smoothness, 
few technical details need to be introduced and we make use of $\mathcal{C}^\infty(\mathbb{X}):=\bigcap_{s>0}\mathcal{A}^s(L_\infty(\mathbb{X}))$:
\begin{definition}\label{def:smooth}
We say that $\mathbb{X}$ satisfies the \emph{smooth cut-off property} if for any $s>0$ and any two concentric balls $B',B$ with $B'\subsetneq B$ there is $\phi\in \mathcal{C}^\infty(\mathbb{X})$ such that $\phi$ equals $1$ on $B'$ and $\phi$ vanishes outside of $B$ with $|\phi(x)|\leq 1$, for all $x\in\mathbb{X}$.
\end{definition}
Note that any smooth manifold satisfies the smooth cut-off property. 
\begin{definition}
For $x\in\mathbb{X}$, the \emph{local approximation space in $x$} is denoted by $\mathcal{A}^s(X_p(\mathbb{X}),x)$ and defined as the collection of $f\in X_p(\mathbb{X})$ such that there is an open ball $B$ containing $x$ with $f\phi \in \mathcal{A}^s(X_p(\mathbb{X}))$, for all $\phi\in\mathcal{C}^\infty(\mathbb{X})$ with support in $B$. 
\end{definition}
It turns out that the approximation rate of $\sigma_N(f,\mu_N)$ characterizes the approximation class of $f$, at least when switching to dyadic numbers $N=2^n$, $n=1,2,\ldots$, cf.~\cite{Filbir:2010aa,Maggioni:2008fk,Mhaskar:2010kx,Mhaskar:2011uq}:
\begin{theorem}\label{th:alles}
Let $\mathbb{X}$ satisfy the smooth cut-off property. For $p=\infty$, suppose 
%that the weak product assumption holds with parameter $a$, and 
that $(\mu_{2^n})_{n=1}^\infty$ is a family of Marcinkiewicz-Zygmund quadrature measures of order $2^n$, respectively. For $1\leq p<\infty$, we choose $\mu_{2^n}=\mu$, $n=1,2,\ldots$. 
If $H$ is a low-pass filter, 
then the following points are equivalent:
\begin{itemize}
\item[(i)] $f\in \mathcal{A}^s(X_p(\mathbb{X}),x)$,

%\item[(ii)] there is a ball $\mathcal{X}$ centered at $x$ such that $f\in  W^s(L_p(\mathcal{X}))$,

%\item[(ii)] there is a ball $B$ centered at $x$ such that 
%%\begin{equation*}
%%\|f- \sum_{y\in\mathcal{C}_N} \sum_{j=-1}^N\langle f,\widetilde{\Psi}_{j,y}\rangle \Psi_{j,y}  \|_\infty\lesssim N^{-s}.
%%\end{equation*}
%%
%\begin{equation}\label{eq:tau o}
%\|\tau_N(f,\mu_N)\|_{L_p(B)} \lesssim N^{-s},
%\end{equation}
%
\item[(ii)] there is a ball $B$ centered at $x$ such that 
\begin{equation}\label{eq:sigma estimate characterization}
\|f-\sigma_{2^n}(f,\mu_{2^n})\|_{L_p(B)} \lesssim 2^{-ns}.
\end{equation}
\end{itemize}
\end{theorem}
Note that the generic constant in \eqref{eq:sigma estimate characterization} may depend on $x$ and $f$. Nonetheless, the above theorem characterizes local approximation by means of $\sigma_{2^n}$

The local approximation space $\mathcal{A}^s(X_p(\mathbb{X}),x)$, for $x\in\mathbb{X}$, is not endowed with any norm. 
In view of Theorem \ref{th:alles}, we fix some ball $B$ and introduce a new approximation space in the following. 
\begin{definition}\label{def:loc sob}
Let $B$ be a nonempty ball in $\mathbb{X}$. For $p=\infty$, suppose 
%that the weak product assumption holds with parameter $a$, and 
that $(\mu_{2^n})_{n=1}^\infty$ is a family of Marcinkiewicz-Zygmund quadrature measures of order $2^n$, respectively. For $1\leq p<\infty$, we choose $\mu_{2^n}=\mu$, $n=1,2,\ldots$. 
If $H$ is a low-pass filter, then we define the \emph{local approximation space in $B$} by
\begin{equation*}
\mathcal{A}^s(X_p(\mathbb{X}),B):=\{ f\in X_p(\mathbb{X}) :\|f\|_{\mathcal{A}^s(X_p(\mathbb{X}),B)}<\infty  \},
\end{equation*}
where
\begin{equation*}
\|f\|_{\mathcal{A}^s(X_p(\mathbb{X}),B)}:= \|f\|_{L_p(\mathbb{X})}+\sup_{n\geq 1}2^{ns}\| f - \sigma_{2^n}(f,\mu_{2^n})\|_{L_p(B)}.
\end{equation*}
\end{definition}
Note that if $p=\infty$, then the space $\mathcal{A}^s(X_p(\mathbb{X}),B)$ implicitly depends on the family $(\mu_{2^n})_{n=1}^\infty$ of Marcinkiewicz-Zygmund quadrature measures of order $2^n$, respectively. 
As opposed to $\mathcal{A}^s(X_p(B))$ defined in \eqref{eq:def of Sobolev}, the space $\mathcal{A}^s(X_p(\mathbb{X}),B)$ consists of functions defined globally that inherit approximation properties locally. By definition, we have 
\begin{equation}\label{eq:best approx for trivial case}
%\|\tau_N(f,\mu_N)\|_{L_p(B)} &\lesssim N^{-s}\|f\|_{\mathcal{A}^s(X_p(\mathbb{X}),B)},\\
\|f-\sigma_{2^n}(f,\mu_{2^n})\|_{L_p(B)} \leq 2^{-ns}\|f\|_{\mathcal{A}^s(X_p(\mathbb{X}),B)}.
\end{equation}
Since $\sigma_{2^n}(f,\mu_{2^n})$ is a diffusion polynomial, we observe that 
\begin{equation*}
\mathcal{A}^s(X_p(\mathbb{X}),B)_{|B}\hookrightarrow \mathcal{A}^s(X_p(B)).
\end{equation*}
However, we cannot claim that the reverse embedding also holds. 
%We have a brief discussion of the space $W^s(L_p,B)$ in Appendix \ref{sec:localized}.  

 It should be mentioned that $\sigma_{2^n}(f,\mu_{2^n})$ in \eqref{eq:best approx for trivial case} approximates $f$ locally but its definition needs global knowledge of $f$ or at least on $\supp(\mu_{2^n})$ if $p=\infty$. To enable the design of an approximation scheme that involves local information on $f$ exclusively, we define one more approximation space by using some cut-off function:
 \begin{definition}
 For some fixed $\phi\in \mathcal{C}^\infty(\mathbb{X})$, define
 \begin{equation}\label{eq:def of local sob new again}
 \mathcal{A}^s(X_p(\mathbb{X}),\phi) : = \{f\in X_p(\mathbb{X}) : f\phi\in \mathcal{A}^s(X_p(\mathbb{X}))\}
 \end{equation}
endowed with the norm $\|f\|_{\mathcal{A}^s(X_p(\mathbb{X}),\phi)}:=\|f\|_{L_p(\mathbb{X})} + \sup_{n\geq 1}2^{ns}E(f\phi,\Pi_{2^n},L_p(\mathbb{X}))$. 
\end{definition}
To study local approximation, choose two concentric balls $B',B$ with $B'\subsetneq B$. If $\mathbb{X}$ satisfies the smooth cut-off property, then we can fix some $\phi\in\mathcal{C}^\infty(\mathbb{X})$ that is one on $B'$ and zero outside of $B$. The Definition \eqref{eq:def of local sob new again} yields that $f\in  \mathcal{A}^s(X_p(\mathbb{X}),\phi)$ implies
\begin{equation*}
\|f-\sigma_{2^n}(f\phi,\mu_{2^n})\|_{L_p(B')}\leq \|f\phi - \sigma_{2^n}(f\phi,\mu_{2^n})\|_{L_p(\mathbb{X})}  \lesssim 2^{-ns}\|f\|_{\mathcal{A}^s(X_p(\mathbb{X}),\phi)}. %\|\tau_N(f\phi,\mu_N)\|_{L_p(B')} \lesssim N^{-s}\|f\phi\|_{W^s(L_p,\phi)},
%\|f-\sigma_N(f\phi,\mu_N)\|_{L_p(B')} &\lesssim N^{-s}\|f\phi\|_{\mathcal{A}^s(X_p(\mathbb{X}),\phi)}.
\end{equation*}
%Moreover, the localization property of $K_N$ yields $\mathcal{A}^s(X_p(\mathbb{X}),\phi) \hookrightarrow \mathcal{A}^s(X_p(\mathbb{X}),B)$. 
In the subsequent section, we shall consider balls of radius $r$ for both spaces, 
\begin{align}
\overline{\mathcal{A}^s}(X_p(\mathbb{X}),B)&:=\{ f\in X_p(\mathbb{X}) :\|f\|_{\mathcal{A}^s(X_p(\mathbb{X}),B)}\leq 1\}, \label{eq:rad r sob} \\
\overline{\mathcal{A}^s}(X_p(\mathbb{X}),\phi)&:=\{ f\in X_p(\mathbb{X}) :\|f\|_{\mathcal{A}^s(X_p(\mathbb{X}),\phi)}\leq 1\},\label{eq:rad r sob 2}
\end{align}
and aim to develop approximation schemes requiring only few bits. 
%We must mention though that we do not know if either of the spaces $\mathcal{A}^s(X_p(\mathbb{X}),B')_{|B'}$ or $\mathcal{A}^s(X_p(\mathbb{X}),\phi)_{|B'}$ coincides with the approximation space $\mathcal{A}^s(X_p(B'))$. 
 %, for which we computed the metric entropy. 

 \subsection{Local bit-representation in approximation classes}
 
To design an approximation scheme for the spaces $\overline{\mathcal{A}^s}(X_p(\mathbb{X}),B)$ and $\overline{\mathcal{A}^s}(X_p(\mathbb{X}),\phi)$, let $B$ be a ball in $\mathbb{X}$ and let $B'\subset B$ be another nonempty ball concentric with $B$ and of radius strictly less. It will turn out that the following scheme enables us to approximate $f$ on $B'$. For some fixed $S>1$, we apply the quantization $I_{2^n}(f,\mu_{2^n},x)$ as in \eqref{eq:quantization2} but in place of \eqref{eq:discretized approx operator2} we define the local approximation by 
\begin{equation}\label{eq:discretized approx operator}
\sigma^\circ_{2^n}(f,\mu_{2^n},\nu_{2^n},B):= 2^{-nS} \int_{B} I_{2^n}(f,\mu_{2^n},x) \sum_{k=0}^\infty H(\frac{\lambda_k}{{2^{n+1}}})\varphi_k(x)d\nu_{2^n}(x)\varphi_k,
\end{equation}
We have the following result for the ball $\overline{\mathcal{A}^s}(X_p(\mathbb{X}),B)$ of the localized approximation space given by \eqref{eq:rad r sob}:
\begin{theorem}\label{th:alles2}
Suppose that $\mathbb{X}$ satisfies the smooth cut-off property and that $(\mu_{2^n})_{n=1}^\infty$ is a family of Marcinkiewicz-Zygmund quadrature measures of order ${2^n}$, respectively, if $p=\infty$. For $1\leq p<\infty$ we choose $\mu_{2^n}=\mu$, $n=1,2,\ldots$. Assume further that $H$ is a low-pass filter. Let $B, B'$ be two concentric balls, so that $B'\subsetneq  B$. We also suppose that $(\nu_{2^n})_{n=1}^\infty$ are Marcinkiewicz-Zygmund quadrature measures of order ${2^{n+1}}$ with $\#\supp(\nu_{2^n}) \lesssim 2^{n\alpha}$. For fixed $s>0$ and $S>\max(1,s)$, we apply the discretizations \eqref{eq:quantization2} and \eqref{eq:discretized approx operator}. Then there is a constant $c>0$ such that, for all $f\in \overline{\mathcal{A}^s}(X_p(\mathbb{X}),B)$,
\begin{equation}\label{eq:approx inequality in theorem}
\|f-\sigma^\circ_{2^n}(f,\mu_{2^n},\nu_{2^n},B)\|_{L_p(B')} \leq c  2^{-ns}
\end{equation}
holds. For $c2^{-ns} = \varepsilon\leq 1$ and $\varepsilon \leq 1$, the number of bits needed to represent all integers $\{I_{2^n}(f,\mu_{2^n},x): x\in  \supp(\nu_{2^n}) \cap B\}$ does not exceed a positive constant (independent of $\varepsilon$) times 
\begin{equation}\label{eq:almost optimal}
(1/\varepsilon)^{\alpha/s} (1+\log_2(1/\varepsilon) ).
\end{equation}
\end{theorem}
\begin{proof}[Proof of Theorem \ref{th:alles2}]
%A rescaling argument yields that we we can restrict our considerations to $r=1$ when taking care of \eqref{eq:approx inequality in theorem}. 
For $f\in \overline{\mathcal{A}^s}(X_p(\mathbb{X}),B)$, we use the localization property \eqref{eq:localization result!} with $r=\alpha+S$ and the embedding $L_p\hookrightarrow L_1$ in the compact case to derive, for $y\in B'$, 
\begin{align}
 \int_{\mathbb{X}\setminus B} \big|\sigma_{2^n}(f,\mu_{2^n},x)K_{2^{n+1}}(x,y) \big| d|\nu_{2^n}|(x)
 &\lesssim  2^{-nS}\| \sigma_{2^n}(f,\mu_{2^n}) \|_{|\nu_{2^n}|,L_p}\nonumber\\
 & \lesssim 2^{-nS}\| \sigma_{2^n}(f,\mu_{2^n}) \|_{|\mu_{2^n}|,L_p}.\label{eq:noch eine halt}
\end{align}
The latter estimate holds because both $(\nu_{2^n})_{n=1}^\infty$ and $(\mu_{2^n})_{n=1}^\infty$ are families of Marcinkiewicz-Zygmund measures. 
%where we have used the kernel 
%According to \eqref{eq:for later use}, \eqref{MZ-estimate} and \eqref{eq:sigma Lp}, we obtain
%\begin{equation*}
%|\sigma_N(f)| \lesssim N^{-S} \| \sigma_N(f) \|_{L_p} \lesssim N^{-S} \|f\|_{L_p}.
%\end{equation*}
 The quantization \eqref{eq:quantization2} immediately yields 
 %with \eqref{eq:for later use} and \eqref{eq:one estimate} yield
\begin{equation}\label{eq:quantization immediately yields}
|\sigma_{2^n}(f,\mu_{2^n},x)-2^{-nS}I_{2^n}(f,\mu_{2^n},x)|\leq 2^{-nS},\quad\text{for all $x\in\supp(\nu_{2^n})$.}
\end{equation}
By using \eqref{eq:quantization immediately yields}, \eqref{eq:for later use}, and \eqref{eq:one estimate}, we derive
\begin{align*}
\|\sigma_{2^n}(f,\mu_{2^n})-\sigma^\circ_{2^n}(f,\mu_{2^n},\nu_{2^n},B)\|_{L_p(B')} & \\
&\hspace{-3cm}= \big\|\sigma_{2^n}(f,\mu_{2^n})-\int_B  2^{-nS} I_{2^n}(f,\mu_{2^n},x)K_{2^{n+1}}(x,\cdot)d\nu_{2^n}(x) \big\|_{L_p(B')} \\
&\hspace{-3cm} \lesssim \big\|\sigma_{2^n}(f,\mu_{2^n})-\int_B  \sigma_{2^n}(f,\mu_{2^n},x) K_{2^{n+1}}(x,\cdot)d\nu_{2^n}(x) \big\|_{L_p(B')} + 2^{-nS}.
\end{align*}
Next, we make use of \eqref{eq:for later use} and \eqref{eq:noch eine halt} with \eqref{eq:sigma Lp} to obtain
\begin{align*}
\|\sigma_{2^n}(f,\mu_{2^n})-\sigma^\circ_{2^n}(f,\mu_{2^n},B)\|_{L_p(B')}& \lesssim \big\|\int_{\mathbb{X}\setminus B} \sigma_{2^n}(f,\mu_{2^n},x) K_{2^{n+1}}(x,\cdot) d\nu_{2^n}(x)\big\|_{L_p(B')} +
2^{-nS} \\ %+N^{-S}\|f\|_{\mathcal{C}_N,L_p}\\
%& \lesssim N^{-S}(1+\|f\|_{\mathcal{C}_N,L_p})
& \lesssim 2^{-nS}\|f\|_{|\mu_{2^n}|,L_p}+2^{-nS}\lesssim 2^{-nS}.
\end{align*}
Here, it is important that we assume $\mu_{2^n}=\mu$, for $1\leq p<\infty$, so that $\|f\|_{|\mu_{2^n}|,L_p}\lesssim  \|f\|_{L_p(\mathbb{X})}$ holds for the entire range $1\leq p\leq \infty$. Note that $\|f\|_{L_p(\mathbb{X})}\leq 1$ follows from $f\in \overline{\mathcal{A}^s}(X_p(\mathbb{X}),B)$. The triangle inequality with \eqref{eq:best approx for trivial case} and the above estimate yield 
\begin{align*}
\|f-\sigma^\circ_{2^n}(f,\mu_{2^n},B)\|_{L_p(B')} &\lesssim  \|f-\sigma_{2^n}(f,\mu_{2^n})\|_{L_p(B')} \\ 
& \qquad \qquad + \|\sigma_{2^n}(f,\mu_{2^n})-\sigma^\circ_{2^n}(f,\mu_{2^n},\nu_{2^n},B) \|_{L_p(B')} \\
& \lesssim 2^{-ns}+2^{-nS}\lesssim  2^{-ns},
\end{align*}
which verifies \eqref{eq:approx inequality in theorem}. 

For the remaining part, we can follow the lines of the proof of Theorem \ref{th:alles22}. 
\end{proof}

Note that Theorem  \ref{th:alles2} still  requires global knowledge of $f$ because we need to build $\sigma_{2^n}(f,\mu_{2^n})$. For the sake of completeness, we use a cut-off function to feed in local information only:
\begin{theorem}\label{theorem last one}
Under the same assumption as in Theorem \ref{th:alles22}, let $\phi\in \mathcal{C}^\infty(\mathbb{X})$ be one on $B'$ and zero outside of $B$. Then there is a constant $c>0$ such that, for $f\in \overline{\mathcal{A}^s}(X_p(\mathbb{X}),\phi)$,
\begin{equation}\label{eq:approx inequality in theorem 2}
\begin{split}
%\|f-\sigma^\circ_N(f\phi,\mu_N,\nu_N,B)\|_{L_p(B')} & \leq c N^{-s},\\
\|f-\sigma^\circ_{2^n}(f\phi,\mu_{2^n},\nu_{2^n})\|_{L_p(B')} &\leq c 2^{-ns}
\end{split}
\end{equation}
holds. For $c2^{-ns} = \varepsilon \leq 1$, the number of bits needed to represent all integers $\{I_{2^n}(f\phi,\mu_{2^n},x): x\in \supp(\nu_{2^n}) \}$ does not exceed a positive constant  (independent of $\varepsilon$) times 
\begin{equation}\label{eq:almost optimal 2}
(1/\varepsilon)^{\alpha/s}(1+ \log_2(1/\varepsilon) ).
\end{equation}
%If $\supp(\nu_N) \lesssim N^\alpha$, then the number of bits needed to represent all integers $\{I_N(f\phi,\mu_N,y): y\in \supp(\nu_N) \}$ does not exceed \eqref{eq:almost optimal 2} either.
\end{theorem}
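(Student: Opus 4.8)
The plan is to deduce the statement from the already established global and localized bit-representation results, Theorems \ref{th:alles22} and \ref{th:alles2}, by feeding the truncated product $f\phi$ into these as the input function. Everything rests on one structural observation: for a fixed cut-off $\phi$, membership $f\in\overline{W^s_r}(L_p,\phi)$ transfers to \emph{global} Sobolev smoothness of $f\phi$ with a comparable radius. Indeed, Definition \eqref{eq:def of local sob new again} gives $f\phi\in W^s(L_p(\mathbb{X}))$, and
\begin{equation*}
\|f\phi\|_{W^s(L_p(\mathbb{X}))}=\|f\phi\|_{L_p(\mathbb{X})}+\sup_{N\geq 1}N^sE(f\phi,\Pi_N,L_p(\mathbb{X}))\leq \|\phi\|_{L_\infty(\mathbb{X})}\|f\|_{L_p(\mathbb{X})}+\|f\|_{W^s(L_p(\mathbb{X}),\phi)}\lesssim r,
\end{equation*}
with an implied constant depending only on the fixed $\phi$. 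Since $\|\tau_N(f\phi,\mu_N)\|_{L_p(B)}\leq\|\tau_N(f\phi,\mu_N)\|_{L_p(\mathbb{X})}\lesssim N^{-s}\|f\phi\|_{W^s(L_p(\mathbb{X}))}$ by Theorem \ref{th:smoothness}, the globally smooth function $f\phi$ is also locally smooth over $B$, so that $f\phi\in\overline{W^s_{c_0r}}(L_p,B)$ for a suitable constant $c_0$.

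With this in hand the two approximation bounds are immediate. For the second inequality in \eqref{eq:approx inequality in theorem 2}, I would apply Theorem \ref{th:alles22} to $g:=f\phi\in\overline{W^s_{c_0r}}(L_p,\mathbb{X})$; since $\sigma^\circ_N(\cdot,\mu_N,\nu_N,\mathbb{X})$ is precisely the global operator \eqref{eq:discretized approx operator2}, this yields $\|f\phi-\sigma^\circ_N(f\phi,\mu_N,\nu_N,\mathbb{X})\|_{L_p(\mathbb{X})}\lesssim rN^{-s}$ at once. For the first inequality, I would apply Theorem \ref{th:alles2} to the same input $g=f\phi\in\overline{W^s_{c_0r}}(L_p,B)$, obtaining $\|f\phi-\sigma^\circ_N(f\phi,\mu_N,\nu_N,B)\|_{L_p(B')}\lesssim rN^{-s}$; the localization of $K_N$ that makes the truncation of the operator to $B$ harmless is already absorbed into the proof of Theorem \ref{th:alles2}. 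Finally, because $\phi\equiv 1$ on $B'$ we have $f=f\phi$ pointwise on $B'$, so replacing $f\phi$ by $f$ leaves the $L_p(B')$-norm on the left unchanged, which delivers \eqref{eq:approx inequality in theorem 2}.

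For the bit count I would follow verbatim the corresponding part of the proof of Theorem \ref{th:alles22}, now with $g=f\phi$. The Nikolskii-type inequality $\|h\|_{L_\infty(\mathbb{X})}\lesssim N^\alpha\|h\|_{L_p(\mathbb{X})}$ for $h\in\Pi_N$, combined with \eqref{eq:quantization immediately yields2} and $\|\sigma_N(f\phi,\mu_N)\|_{L_p(\mathbb{X})}\lesssim\|f\phi\|_{L_p(\mathbb{X})}\leq\|\phi\|_{L_\infty(\mathbb{X})}\,r$, gives $|I_N(f\phi,\mu_N,y)|\lesssim rN^{S+\alpha}$ uniformly in $y$, so each integer costs at most $\log_2(c_1rN^{S+\alpha})$ bits. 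Since $\#\supp(\nu_N)\lesssim N^\alpha$, the total is at most $c_2N^\alpha\log_2(c_1rN^{S+\alpha})$, and substituting $N^\alpha=(cr/\varepsilon)^{\alpha/s}\asymp(r/\varepsilon)^{\alpha/s}$ coming from $crN^{-s}=\varepsilon$ collapses this to $(r/\varepsilon)^{\alpha/s}(1+\log_2(r/\varepsilon))$, exactly as in \eqref{eq:almost optimal2}.

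I expect no genuine obstacle, as the statement is essentially a corollary whose entire content is the reduction to $f\phi$; the steps requiring care are bookkeeping. First, one must track that the radius of the ball containing $f\phi$ is a fixed multiple of $r$ (the multiple depending on $\phi$, $B$, $B'$ but not on $f$, $N$, $\varepsilon$, $r$), so that the generic constants in the final estimates remain independent of $\varepsilon$ and $r$. Second, although the integers are now indexed by all of $\supp(\nu_N)$ rather than $\supp(\nu_N)\cap B$ as in Theorem \ref{th:alles2}, the cardinality bound $\#\supp(\nu_N)\lesssim N^\alpha$ is unaffected, so the bit count is identical. The genuinely local feature motivating the statement---that $\sigma_N(f\phi,\mu_N)$ uses only the values of $f$ on $B\cap\supp(\mu_N)$ because $f\phi$ vanishes off $B$---plays no role in the estimates themselves.
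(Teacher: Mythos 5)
Your proposal is correct and follows essentially the same route as the paper: reduce everything to Theorems \ref{th:alles2} and \ref{th:alles22} applied to the globally Sobolev function $f\phi$ (whose norm is a fixed multiple of $r$), and use $\phi\equiv 1$ on $B'$ to pass back to $f$. The paper re-derives the second inequality of \eqref{eq:approx inequality in theorem 2} by a short triangle-inequality argument rather than citing Theorem \ref{th:alles22} outright, but this is the same computation; your explicit verification that $f\phi$ lands in the relevant balls of radius $\asymp r$ is a detail the paper leaves implicit.
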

\begin{proof}
The smooth cut-off property of $\phi$ yields that $f\in \overline{\mathcal{A}^s}(X_p(\mathbb{X}),\phi)$ leads to $f\phi\in \overline{\mathcal{A}^s}(X_p(\mathbb{X}))$, due to $|\phi(x)|\leq 1$, for all $x\in\mathbb{X}$.  Thus, Theorem \ref{th:alles22} applied to $f\phi$ implies \eqref{eq:approx inequality in theorem 2} and \eqref{eq:almost optimal 2}. 
%%Since $\mathcal{A}^s(X_p(\mathbb{X}),\phi)\hookrightarrow \mathcal{A}^s(X_p(\mathbb{X}),B)$, we can simply replace $f$ with $f\phi$ in Theorem \ref{th:alles2} to derive \eqref{eq:almost optimal 2} and the left-hand side of \eqref{eq:approx inequality in theorem 2}. It only remains to check the right-hand side of \eqref{eq:approx inequality in theorem 2}. 
%By using \eqref{eq:quantization immediately yields} and \eqref{eq:one estimate}, we obtain
%\begin{equation*}
%\|\sigma_N(f\phi,\mu_N)-\sigma^\circ_N(f\phi,\mu_N,\nu_N,\mathbb{X}) \|_{L_p(\mathbb{X})}\lesssim N^{-s},
%\end{equation*}
%so that we can derive
%\begin{align*}
%\|f-\sigma^\circ_N(f\phi,\mu_N,\nu_N,\mathbb{X})\|_{L_p(B')}&\lesssim \|f\phi-\sigma^\circ_N(f\phi,\mu_N,\nu_N,\mathbb{X})\|_{L_p(\mathbb{X})} \\
%&\lesssim  \|f\phi-\sigma_N(f\phi,\mu_N)\|_{L_p(\mathbb{X})} \\
%& \hspace{1cm}+ \|\sigma_N(f\phi,\mu_N)-\sigma^\circ_N(f\phi,\mu_N,\nu_N,\mathbb{X}) \|_{L_p(\mathbb{X})} \\
%& \lesssim N^{-s}+N^{-s}\lesssim  N^{-s}.
%\end{align*}
%The estimate \eqref{eq:almost optimal 2} can be derived by following the lines in the proof of Theorem \ref{th:alles22}.
\end{proof}

\appendix 

\section{Summary of the technical assumptions}\label{sec assumptions}
The asymptotic bounds on the metric entropy in Theorem \ref{th:op bit} hold for any diffusion measure space $\mathbb{X}$ as introduced in Definition \ref{dimmdef} provided that the restrictions $\{\varphi_{k|B}\}_{k=0}^\infty$ are linearly independent on the ball $B\subset\mathbb{X}$. Our computational scheme that matches this bound up to a logarithmic factor needs few additional technical assumptions that are distributed within the present paper. Here, we list all the required assumptions for the sake of completeness:
\begin{enumerate}[(I)]
\item \label{item 1}$\mathbb{X}$ is a diffusion measure space (Definition \ref{dimmdef}),
%\item\label{item 2} the strong product assumption holds (Definition \ref{definition:strong}),
\item\label{item2.5} $\epsilon_N$ defined in \eqref{eq:def of eps} satisfies $N^m\epsilon_N \rightarrow 0$ as $N\rightarrow \infty$, for all $m>0$, 
\item\label{item 3} there is a family $(\nu_N)_{N=1}^\infty$ of Marcinkiewicz-Zygmund quadrature measures  of order $2N$, respectively, satisfying $\#\supp(\nu_N) \lesssim N^\alpha$ (Definitions \ref{def:quad}, \ref{def:MZ} and \eqref{eq:for later use}),
\item\label{item 4} the smooth cut-off property holds (Definition \ref{def:smooth}).
\end{enumerate}
Condition \eqref{item 1} is the general framework, and the additional conditions are further technical details. Note that \eqref{item 4} is only needed in Section \ref{sec:local}, and it is well-known that this holds for smooth manifolds. All of the above conditions hold for compact homogeneous manifolds, e.g., the sphere and the Grassmann manifold, if the function system $\{\varphi_k\}_{k=0}^\infty$ are eigenfunctions of the Laplace operator, cf.~\cite{Geller:2011fk}. Moreover, it was pointed out in \cite{Chui:2013fk} that the conditions are also satisfied for smooth compact Riemannian manifolds with nonnegative Ricci curvature, see also Remark \ref{rem 2.2}. Families of Marcinkiewicz-Zygmund quadrature measures $(\mu_N)_{N=1}^\infty$ were then constructed in \cite{Filbir:2011fk}, such that  $\#\supp(\mu_N) \asymp N^\alpha$. 

\section*{Acknowledgment}
M.E.~has been funded by the Vienna Science and Technology Fund (WWTF) through project VRG12-009. 
%the NIH/DFG Research Career Transition Awards Program
%(EH 405/1-1/575910). 
%The research of F.F.~was partially funded by Deutsche Forschungsgemeinschaft grant FI 883/3-1. 
The authors thank H.~N.~Mhaskar for many fruitful discussions.

\bibliography{../biblio_ehler2}

\end{document}